\definecolor{darkblue}{rgb}{0,0,.4}
\theoremstyle{plain}
\newtheorem{theorem}{Theorem}
\newtheorem{corollary}[theorem]{Corollary}
\newtheorem{proposition}[theorem]{Proposition}
\theoremstyle{definition}
\newcommand{\M}{\mathcal{M}}
\newcommand{\fp}{{\rm fp}}
\newcommand{\exc}{{\rm exc}}
\newcommand{\inv}{{\rm inv}}
\newcommand{\crs}{{\rm cr}}
\newcommand{\nes}{{\rm nes}}
\newcommand{\sh}{{\rm sh}}
\newcommand{\hor}{{\rm hor}}
\newcommand{\up}{{\rm up}}
\newcommand{\down}{{\rm down}}
\newcommand{\area}{{\rm area}}
\numberwithin{equation}{section} 
\numberwithin{theorem}{section}
\title{Crossings and nestings  over some Motzkin objects and $q$-Motzkin numbers}
\author{
Sandrataniaina R. Andriantsoa \qquad Paul M. Rakotomamonjy\\
\small Department of Mathematics and Computer Science\\[-0.8ex]
\small Sciences and Technology, PB 906 Antananarivo 101\\[-0.8ex]
\small Madagascar\\
\small\tt \{andrian.2sandra, rpaulmazoto\}@gmail.com
}
\date{}
\begin{document} 
	\maketitle
	\begin{abstract} 		
		We examine the enumeration of certain Motzkin objects according to the numbers of crossings and nestings. With respect to continued fractions, we compute and express the distributions of the statistics of the numbers of crossings and nestings over three sets, namely the set of $4321$-avoiding involutions, the set of $3412$-avoiding involutions, and the set of $(321,3\bar{1}42)$-avoiding permutations. To get our results, we exploit the bijection of Biane restricted to the sets of $4321$- and $3412$-avoiding involutions which was characterized by Barnabei et al.~ and the bijection between $(321,3\bar{1}42)$-avoiding permutations and Motzkin paths, presented by Chen et al.~. Furthermore, we manipulate the obtained continued fractions to get the recursion formulas for the polynomial distributions of crossings and nestings, and it follows that the results involve two new $q$-Motzkin numbers.
		\begin{center}
			\textbf{Keywords:}  Crossing, nesting, Motzkin numbers, $q$-Motzkin numbers, restricted permutations, restricted involution.\\
			
			\textbf{2010 Mathematics Subject Classification}:\ 11A55, 05A19, 05A15 and  05A05.
		\end{center}
	\end{abstract}

	\section{Introduction and result}\label{sec0}
		
		We let $S_n$ denote the set of all permutations of $[n]:=\{1,\ldots,n\}$. A permutation $\sigma\in S_n$ is an \textit{involution} whenever $\sigma(\sigma(i)) = i$ for all $i\in [n]$, and we write $I_n$ to denote the set of all involutions in $S_n$. We refer $|\sigma|$ as the length of the permutation $\sigma$.
		
		Suppose that $\sigma \in S_n$ and $\tau\in S_k$. We say a subsequence $s=\sigma(i_1)\sigma(i_2)\cdots \sigma(i_k)$ is an \textit{occurrence }of $\tau$ if and only if $s$ and $\tau$ are in the same isomorphic order, i.e. $\sigma(i_x)<\sigma(i_y)$ if and only if $\tau(x)<\tau(y)$. Whenever $\sigma$ contains no occurrence of $\tau$, we say that $\sigma$ \textit{avoids} the pattern $\tau$ or simply $\sigma$ is $\tau$-\textit{avoiding}. 	For any given set of permutations $T$, called set of patterns, we write $S_n(T)$
		and $I_n(T)$ to denote the sets of permutations and involutions, respectively, in $S_n$ which avoid every pattern in $T$, and we write $S(T)$ and $I(T)$ to denote the sets of all permutations and involutions, respectively, including the empty permutation, which avoid every pattern in $T$. Usually, if $T=\{\tau_1,\ldots,\tau_k\}$, we write $S_n(T)=S_n(\tau_1,\ldots,\tau_k)$ and $S(T)=S(\tau_1,\ldots,\tau_k)$. For example, the subsequence $5297$ of the permutation $\pi=35142987$ is an occurrence of $2143$. We can easily verify that $\pi\in I_9(4321)$.
		
		A \textit{barred permutation} is a permutation such that some of its elements are barred. We let $\bar{S}_k$ denote the set of all barred permutations of length $k$. Example: $3\bar{2}4\bar{5}1$ is a barred permutation in $\bar{S}_5$. Whenever $\bar{\tau}\in \bar{S}_k$, we let $\tau$ denote the permutation obtained by unbarring $\bar{\tau}$, and $\tau'$ denote the reduction of the permutation obtained from $\bar{\tau}$ by removing the barred element (if any). Example: if $\bar{\tau}=3\bar{2}4\bar{5}1$, we have $\tau=32451$ and $\tau'=231$.
		We say that a permutation $\sigma$ avoids $\bar{\tau}$ if each occurrence of $\tau'$ in $\sigma$ (if any) is part of an occurrence  of $\tau$ in $\sigma$. For example, a permutation $\sigma$ is $3\bar{1}42$-avoiding if and only if, for any occurrence $\sigma(i)\sigma(j)\sigma(k)$ of $321$ in $\sigma$, there exists $i <l< j$ such that $\sigma(l)<\sigma(k)$.
		
		A \textit{statistic} over a given set $E$ is a map $s:E\rightarrow \mathbb{N}$. The polynomial distribution of the statistic $s$ over the $E$ is the polynomial  $	\sum_{e \in E}q^{s(e)}$.	For example, if we let $\mathcal{P}_n$ denote the set of all subsets of $[n]$ and  ${\rm card}(A)$ denote the cardinality of $A$ for any $A\in \mathcal{P}_n$, then ${\rm card}$ is a statistic over  $\mathcal{P}_n$ and its polynomial distribution is
		\begin{align*}
		\sum_{A\in \mathcal{P}_n}q^{{\rm card}(A)}&=(1+q)^n.
		\end{align*}
		Furthermore, the generating function of the polynomial $\sum_{A\in \mathcal{P}_n}q^{{\rm card}(A)}$ is 
		\begin{align*}
			1+\sum_{n\geq 1}\sum_{A\in \mathcal{P}_n}q^{{\rm card}(A)}t^n&=\frac{1}{1-(1+q)t}.
		\end{align*}
		We will now define some statistics over $S_n$. For that, we let $\sigma \in S_n$. An \textit{excedance} (resp. \textit{fixed point}) of $\sigma$ is an index $i$ such that $\sigma(i)>i$ (resp. $\sigma(i)=i$). A \textit{crossing} (resp. \textit{nesting}, \textit{inversion})  of $\sigma$ is a pair of indices $(i,j)$ such that $i<j< \sigma(i)<\sigma(j)$  or  $\sigma(i)<\sigma(j)\leq i<j$ (resp. $i<j< \sigma(j)<\sigma(i)$  or  $\sigma(j)<\sigma(i)\leq i<j$, $i<j$ and $\sigma(j)>\sigma(i)$). We let $\exc(\sigma)$ (resp. $\crs(\sigma)$, $\nes(\sigma)$, $\inv(\sigma)$) denote the number of excedances  (resp. crossings, nestings, inversions) of $\sigma$. For example, the permutation $\pi=4\ 6\ 2\ 9\ 8\ 1\ 7\ 3\ 10\ 5\in S_{10}$ has the following properties: $\exc(\pi)=5$, $\crs(\pi)=7$, $\nes(\pi)=4$ and $\inv(\pi)=20$.  Therefore, $\exc$, $\crs$, $\nes$, and $\inv$ 	are all statistics over $S_n$. M\'edicis and Viennot \cite{MedVienot} and Randrianarivony \cite{ARandr} showed that these statistics are related by the following identity:
		\begin{align}\label{eq01}
		\inv(\sigma)=\exc(\sigma)+\crs(\sigma)+2\nes(\sigma), \text{ for any $\sigma \in S_n$}.
		\end{align} 
		
		In this paper, we are interested in the distribution of the statistics numbers of crossings and nestings over the sets $I_n(4321)$, $I_n(3412)$ and $S_n(321,3\bar{1}42)$.	These sets are all enumerated by the $n$-th Motzkin number (the sequence A001006 in \cite{OEIS}) and they are well studied in the literature (see \cite{Barnabei,Chen2,Egge} and references therein).
		
		Motzkin  numbers  $M_n$ are  traditionally  defined  by  the  following  recurrence  relation:
		\begin{align}\label{motz}
		M_0 = 1 \text{ and }  M_n = M_{n-1} +  \sum_{k=0}^{n-2}  M_{k}M_{n-2-k} \text{ for }  n \geq 1. 
		\end{align}
		The first values for $M_n$ are $1, 1, 2, 4, 9, 21, 51, 127, 323, 835, \ldots$~. Using \eqref{motz}, the  generating  function  for Motzkin  numbers, i.e.  $M(t)  =  \sum_{n\geq 0}M_nt^n$,  satisfies the following equivalent functional equations
		\begin{align}\label{motzk2}
		M(t) = 1 +  tM(t)  +  t^2M(t)^2 \text{ and } M(t)=\frac{1}{1-t-t^2M(t)}.
		\end{align}
		Solving the first equation of \eqref{motzk2} for $M(t)$, we obtain
		\begin{align*}
		M(t)&=\frac{1-t-\sqrt{1-2t-3t^2}}{2t^2}.
		\end{align*} 
		The second identity of \eqref{motzk2} leads to the following continued fraction expansion for $M(t)$
		\begin{align*}
		M(t)=\displaystyle \frac{1}{1-t-\displaystyle\frac{t^2}{1-t-\displaystyle\frac{t^2}{1-t-\displaystyle\frac{t^2}{\ \ \ \ \ddots}}}}.
		\end{align*}
		
		The term $q$-Motzkin numbers refers to a sequence of polynomials in $q$ whose evaluation at $q=1$ gives the sequence of Motzkin numbers. In other words, this polynomial satisfies the recurrence
	 for the Motzkin numbers and equals to the Motzkin numbers when specializing $q=1$.  Some $q$-generalizations of Motzkin numbers are studied by Barcucci et al.~\cite{Barcu} and recently by Barnabei and al.  \cite{Barnabei2}. Here, we are interested in two new $q$-Motzkin numbers $M_n(q)$ and $\tilde{M}_n(q)$. The first one $M_n(q)$ is defined by
		\begin{align}\label{M1}
		M_0(q)=1, M_n(q)=M_{n-1}(q)+\sum_{k=0}^{n-2}q^{k}M_{k}(q)M_{n-2-k}(q) \text{ for }n\geq 1.
		\end{align}
		and the second one $\tilde{M}_n(q)$ by
		\begin{align}\label{M2}
		\tilde{M}_0(q)=1,  \tilde{M}_n(q)=\tilde{M}_{n-1}(q)+\sum_{k=0}^{n-2}q^{k+1-(n-1)\delta_{k,n-2}}\tilde{M}_k(q)\tilde{M}_{n-2-k}(q) \text{ for }n\geq 1,
		\end{align}
		where $\delta$ is the usual Kronecker symbol. With a view to extend the results on $q$-Motzkin numbers introduced in \cite{Barcu,Barnabei2}, we will show how these $q$-Motzkin numbers are the distributions of crossings and nestings over restricted permutations, using continued fractions. For continued fractions, we are inspired by various known applications on enumeration of combinatorial objects (eg. \cite{Eliz3, Eliz4,Flajolet, ARandr1,ARandr}).	
			
		The study of the crossings and nestings over permutations was introduced by M\'edicis and Viennot in \cite{MedVienot} and  extended in \cite{BMP,Cort,Cort2,ARandr1,ARandr}. The study of these statistics over pattern-avoiding permutations was introduced by the second author \cite{Rakot} who exploited a known bijection of Elizalde and Pak \cite{ElizP} to find the equidistributions of the crossings over the sets of permutations avoiding the patterns 132, 213, and 321. Later, Rakotomamonjy et al.~\cite{Rakot2} enumerated the sets of permutations avoiding some pairs of patterns of length 3 according to $\crs$, and they found new combinatorial interpretations of some known sequences in \cite{OEIS}.
		In this work, we investigate the enumeration of some Motzkin objects, namely the sets $I_n(4321)$, $I_n(3412)$, and $S_n(321,3\bar{1}42)$, according to the numbers of crossings and nestings (see Theorem \ref{main}). The use of some known bijections between these sets and Motzkin paths allows us to reach our goal. 
		
		A \textit{Motzkin path} of length $n$ is a lattice path starting at $(0,0)$, ending at $(n,0)$, and never going below
		the $x$-axis, consisting of up steps $u = (1,1)$, horizontal steps $h = (1,0)$, and down steps $d = (1,-1)$. The set of Motzkin paths of length $n$ will be denoted by $\mathcal{M}_n$.

		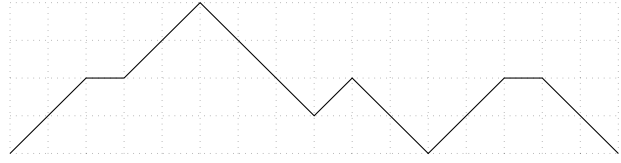
\begin{figure}[h]
			\begin{center}
				\begin{tikzpicture}
				\draw[step=0.5cm, gray, very thin,dotted] (0, 0) grid (8,2);
				\draw[black] (0, 0)--(0.5, 0.5)--(1, 1)--(1.5, 1)--(2, 1.5)--(2.5, 2)--(3, 1.5)--(3.5, 1)--(4, 0.5)--(4.5, 1)--(5, 0.5)--(5.5, 0)--(6, 0.5)--(6.5, 1)--(7, 1)--(7.5, 0.5)--(8, 0);	
				\end{tikzpicture}
				\caption{The Motzkin path $P=uuhuudddudduuhdd\in \M_{16}$.}\label{fig0}
			\end{center}
		\end{figure}
		
		It is well known that the cardinality of $\mathcal{M}_n$ is the $n$-th Motzkin
		number $M_n$. The bijections between our combinatorial objects and Motzkin paths are well studied in \cite{Barnabei,Chen2}. Firstly, Barnabei et al.~\cite{Barnabei} studied the bijection of Biane \cite{Biane} restricted to involutions. The bijection of Biane maps an involution into a Motzkin path whose down steps are labelled with an integer that does not exceed their height, while
		the other steps are unlabelled. Barnabei et al.~ proved the following characterisations:
		\begin{itemize}
			\item An involution $\tau$ avoids $4321$ if and only if the label of any down step
			in the corresponding Motzkin path is $1$. 
			\item An involution $\tau$ avoids $3412$ if and only if the label of any down step in the corresponding Motzkin path equals its height.
		\end{itemize}
		Consequently, when restricted to $4321$- and $3412$-avoiding involutions, we can ignore the labels on Motzkin paths. Secondly, using reduced decomposition of Motzkin paths, Chen et al~\cite{Chen2} exhibited a bijection between $S_n(321,3\bar{1}42)$ and $\M_n$ which exchanges the statistics $\inv$ and $\area-\sh_{u}$. Through these two bijections, we interpret the statistics $\crs$ and $\nes$ in terms of statistics on Motzkin paths. Thus, as main results, we find the following combinatorial interpretations of $M_n(q)$ and $\tilde{M}_n(q)$.
		\begin{theorem} \label{main} We have the following identities
			\begin{align*}
			M_n(q)&=\sum_{\sigma\in I_n(4321)}q^{(\crs+\nes)(\sigma)}=\sum_{\sigma\in I_n(3412)}q^{\nes(\sigma)},\\
			\tilde{M}_n(q)&=\sum_{\sigma\in S_n(321,3\bar{1}42)}q^{\crs(\sigma)}.
			\end{align*}
		\end{theorem}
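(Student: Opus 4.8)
The plan is to transport all three distributions to Motzkin paths through the two bijections quoted above, read off $\crs$ and $\nes$ as height statistics on paths, and then recognize the resulting generating functions as the continued fractions attached to the recurrences \eqref{M1} and \eqref{M2}. The basic tool is Flajolet's theorem: if a horizontal step at height $h$ carries weight $b_h$, every up step carries weight $1$, and a down step descending from height $h$ carries weight $\lambda_h$, then
\begin{align*}
\sum_{P\in\M}\ \prod_{\text{steps}}w(\text{step})=\cfrac{1}{1-b_0-\cfrac{\lambda_1}{1-b_1-\cfrac{\lambda_2}{1-b_2-\ddots}}}.
\end{align*}

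For $I_n(4321)$ and $I_n(3412)$ I first push $\crs$ and $\nes$ through Biane's map. The one genuinely computational step is the dictionary between the label of a down step and the crossings/nestings it produces. Since $\sigma$ is an involution, its excedance arcs and its deficiency arcs are mirror images, so each crossing (resp.\ nesting) of two excedance arcs is matched by exactly one crossing (resp.\ nesting) of the partner deficiency arcs, while a short case check shows that fixed points create only nestings, one for every fixed point lying below a deficiency arc. Reading the chord diagram from left to right, a down step descending from height $h$ that closes the arc whose opener is the $\ell$-th from the outside (this $\ell$ being precisely Biane's label) produces $h-\ell$ arc-crossings and $\ell-1$ arc-nestings, and the number of fixed points below a given deficiency arc equals the height of the corresponding horizontal step. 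Hence, with $h$ denoting the relevant height in each sum,
\begin{align*}
\crs(\sigma)=2\!\!\sum_{\text{down steps}}\!\!(h-\ell),\qquad \nes(\sigma)=2\!\!\sum_{\text{down steps}}\!\!(\ell-1)+\sum_{\text{horizontal steps}}\!\!h.
\end{align*}
By the two characterizations of Barnabei et al., for $4321$-avoiders every label is $\ell=1$ and for $3412$-avoiders every label is $\ell=h$; substituting these values yields in both cases the single Motzkin-path statistic
\begin{align*}
w(P)=2\!\!\sum_{\text{down steps}}\!\!(h-1)+\sum_{\text{horizontal steps}}\!\!h,
\end{align*}
equal to $(\crs+\nes)(\sigma)$ on $I_n(4321)$ and to $\nes(\sigma)$ on $I_n(3412)$.

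It then remains to evaluate $\sum_{P\in\M_n}q^{w(P)}$. The statistic $w$ assigns $q^{h}$ to a horizontal step at height $h$ and $q^{2(h-1)}$ to an up--down pair peaking at height $h$, so Flajolet's formula gives the continued fraction with $b_h=q^{h}t$ and $\lambda_h=q^{2(h-1)}t^2$. Writing $F(t)=\sum_n\big(\sum_{P\in\M_n}q^{w(P)}\big)t^n$, this is exactly the continued fraction solving the functional equation $F(t)=1/\!\left(1-t-t^2F(qt)\right)$, and extracting the coefficient of $t^n$ from $F-tF-t^2F(qt)F=1$ returns precisely \eqref{M1}; hence $\sum_{P\in\M_n}q^{w(P)}=M_n(q)$, proving the first line of Theorem~\ref{main}. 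For $S_n(321,3\bar{1}42)$ I combine \eqref{eq01} with the bijection of Chen et al. First, a $321$-avoiding permutation has no nestings at all: two nesting excedances, two nesting deficiencies, or a fixed point lying below a deficiency arc each force an explicit decreasing subsequence of length three (the inner point together with the arc endpoints). Thus $\nes(\sigma)=0$ on $S_n(321,3\bar{1}42)$ and \eqref{eq01} collapses to $\crs(\sigma)=\inv(\sigma)-\exc(\sigma)$. Since Chen et al.'s bijection carries $\inv$ to $\area-\sh_u$, it suffices to identify the image of $\exc$ on $\M_n$ (plausibly the number of up steps, from the reduced-decomposition description), subtract it to express $\crs$ as an explicit height statistic, write the associated Flajolet continued fraction, and manipulate it into the functional equation whose coefficients are \eqref{M2}.

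The main obstacle is this last case. For the involutions the only real work is the label/crossing dictionary above, after which the continued fraction contracts to \eqref{M1} essentially automatically. For $S_n(321,3\bar{1}42)$, by contrast, two things must be pinned down carefully: the exact image of $\exc$ under Chen et al.'s map, and the reason the recurrence carries the length-dependent weight $q^{k+1-(n-1)\delta_{k,n-2}}$ rather than the uniform $q^{k+1}$. A direct computation from \eqref{M2} shows that $\tilde{F}(t)=\sum_n\tilde{M}_n(q)t^n$ satisfies $\tilde{F}(t)=\big(1-qt^2\tilde{F}(qt)\big)\big/\big(1-t-t^2-qt^2\tilde{F}(qt)\big)$, which is not the plain Motzkin-type equation; the $\delta$-correction should arise from the special role of the steps that return to the $x$-axis (equivalently the $-\sh_u$ term), so that the arch spanning to the end of the path is weighted differently from the others. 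Verifying that the continued fraction for $\crs$ contracts to exactly \eqref{M2}, boundary term included, is the delicate part of the argument.
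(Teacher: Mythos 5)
Your treatment of the two involution classes is correct and is essentially the paper's own route: you transport $\crs$ and $\nes$ through Biane's bijection, arrive at the single path statistic $2\sh_u(P)+\sh_h(P)$ in both cases (this is Proposition \ref{prop31} of the paper, which reaches the same conclusion via $2(\sh_d-\down)=2\sh_u$), apply the Flajolet-type continued fraction with $b_h=q^ht$, $\lambda_h=q^{2(h-1)}t^2$, and observe that this continued fraction is self-similar under $t\mapsto qt$, so that $F(t)=1/\bigl(1-t-t^2F(qt)\bigr)$ and coefficient extraction gives exactly \eqref{M1}. Your general label dictionary ($h-\ell$ crossings, $\ell-1$ nestings per down step) is stated without proof, but only the two extreme cases $\ell=1$ and $\ell=h$ are needed, and those agree with the paper's direct computation.

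The second identity, however, is not proved: you reduce it to the claim that the continued fraction with $\alpha_i=\beta_i=q^{i-1}$ (equivalently, the generating function of $q^{\sh_u+\sh_h}$ over $\M_n$) has coefficients satisfying \eqref{M2}, you correctly observe that this continued fraction is \emph{not} self-similar under $t\mapsto qt$ and that the functional equation $\tilde F=\bigl(1-qt^2\tilde F(qt)\bigr)/\bigl(1-t-t^2-qt^2\tilde F(qt)\bigr)$ produces a genuinely different continued fraction, and you then stop at ``verifying that the continued fraction contracts to exactly \eqref{M2} is the delicate part.'' That verification is precisely the mathematical content of Section \ref{sec4} and cannot be waved through: the paper's Theorem \ref{main12} records the equality of the two continued fractions as a nontrivial identity in its own right, and its proof goes through the Stieltjes tableau $(H_{n,i})$ of Dumont with $\alpha_i=\beta_i=q^{i-1}$, for which one must discover and prove by induction the recursion $H_{n,i}=q^{i-1}\bigl(H_{n-1,i-1}+\sum_{k=i-1}^{n-2}q^{1+k}H_{k,i-1}H_{n-1-k,0}\bigr)$ of Theorem \ref{thm41}; only then does $H_{n,0}=\tilde M_n(q)$ follow (Corollary \ref{cor41}). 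Without this (or some substitute argument such as an explicit contraction or equivalence transformation of the two continued fractions), the identity $\sum_{\sigma\in S_n(321,3\bar 142)}q^{\crs(\sigma)}=\tilde M_n(q)$ remains a conjecture in your write-up. A second, smaller omission: the fact that $\exc$ maps to $\up$ under the bijection of Chen et al.\ is only labelled ``plausible''; it requires the identification $Exc(\sigma)=\{t_1,\dots,t_k\}$ of Theorem \ref{mazchen}, which in turn uses that $321$-avoiding permutations are nonnesting so that excedance values and positions are each increasing.
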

		We organise the rest of this paper in three sections. Section  \ref{sec2} is a preliminary section in which we define some statistics over Motzkin paths and we compute the generating function of their joint distribution. In Section \ref{sec3} and \ref{sec4}, making use of the result obtained in Section \ref{sec2} using the bijections of Biane \cite{Biane} and  Chen et al. \cite{Chen2}, we compute and express, in terms of continued fractions, the distributions of $\crs$ and $\nes$ over the sets $I_n(4321)$, $I_n(3412)$, and $S_n(321,3\bar{1}42)$. Using the obtained continued fractions, we provide the proof of Theorem \ref{main}.
		
		\section{Motzkin paths and statistics}\label{sec2}
		
		In this section, we define some statistics over Motzkin paths and we prove a preliminary result that is fundamental for the rest of the paper (see Theorem \ref{thm21}).
		
		Let $P=P_1P_2\cdots P_n$ be a Motzkin path of length $n$. Let us first define the following sets:
		\begin{align*}
		H\!or(P):=\{i|P_i=h\}, U\!p(P):=\{i|P_i=u\} \text{ and } Down(P):=\{i|P_i=d\}.
		\end{align*}
		The \textit{height} of each step  $s$ in $P$ is the $y$-coordinate of the starting point of $s$. Therefore, the height of the $i$-th step of $P$ is equal to 
		\begin{align*}
		h_i(P)=
		\begin{cases}
		|P(1,i)|_{u}-|P(1,i)|_{d}-1;& \text{ if } P_i=u\\
		|P(1,i)|_{u}-|P(1,i)|_{d};& \text{ if } P_i=h\\
		|P(1,i)|_{u}-|P(1,i)|_{d}+1;& \text{ if } P_i=d
		\end{cases},
		\end{align*} where $P(1,i):=P_1P_2\cdots P_i$ is the initial sub-path of length $i$ of $P$ and, for any word $w$, $|w|_a$ is the number of occurrences of the letter $a$ in the word $w$. Here are some statistics over $\M_n$: 
		\begin{itemize}
			\item $\area(P)$ the area between the path $P$ and the $x$-axis,
			\item $\up(P)$ (resp. $\hor(P)$,$\down(P)$) the number of up (resp. horizontal, down) steps of $P$,
			\item $\sh_{u}(P)$ (resp. $\sh_{h}(P)$, $\sh_{d}(P)$) the sum of heights of all up (resp. horizontal, down) steps of $P$, i.e.
			\begin{align*}
			\sh_{u}(P):=\sum_{i\in U\!p(P)}h_i(P), \sh_{h}(P):=\sum_{i\in Hor(P)}h_i(P), \text{ and } \sh_{d}(P):=\sum_{i\in Down(P)}h_i(P). 
			\end{align*}
		\end{itemize}
		For example, the Motzkin path in Fig. \ref{fig0} has the following properties: $\hor(P)=2$, $\up(P)=\down(P)=7$, $\sh_{u}(P)=8$, $\sh_{h}(P)=4$, $\sh_{d}(P)=15$ and $\area(P)=27$. We will show how these statistics are related.
		\begin{proposition}\label{prop21} For any Motzkin path $P$, we have 
			\begin{align*}
			\area(P)=2\sh_{d}(P)+\sh_{h}(P)-\down(P).
			\end{align*}	
		\end{proposition}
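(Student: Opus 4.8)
The plan is to compute $\area(P)$ by adding up the area lying directly below each individual step, and then to reconcile the resulting expression with the right-hand side by means of a height identity relating up steps and down steps. The area decomposition is the easy part; the height identity is where the real content sits.

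First I would note that over its unit horizontal interval each step sits above a trapezoid (a rectangle in the horizontal case). A horizontal step at height $h_i(P)$ contributes a rectangle of area $h_i(P)$; an up step starting at height $h_i(P)$ contributes a trapezoid of area $h_i(P)+\tfrac12$; and a down step starting at height $h_i(P)$ contributes a trapezoid of area $h_i(P)-\tfrac12$. Summing over all steps and grouping by type gives
\begin{align*}
\area(P)=\sh_{u}(P)+\sh_{h}(P)+\sh_{d}(P)+\tfrac12\bigl(\up(P)-\down(P)\bigr).
\end{align*}
Since $P$ starts and ends on the $x$-axis we have $\up(P)=\down(P)$, so the last term vanishes and $\area(P)=\sh_{u}(P)+\sh_{h}(P)+\sh_{d}(P)$.

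It then remains to establish $\sh_{u}(P)=\sh_{d}(P)-\down(P)$. The key observation is that, for every integer $j\ge 0$, the number of up steps of height $j$ equals the number of down steps of height $j+1$: each upward crossing of the line $y=j+\tfrac12$ (an up step of height $j$) is matched by a later downward crossing (a down step of height $j+1$), and conversely, because $P$ begins and ends at height $0$; moreover no down step has height $0$. Writing $a_j$ for the number of up steps of height $j$, the number of down steps of height $j+1$ is thus also $a_j$, and weighting by height and summing yields
\begin{align*}
\sh_{d}(P)=\sum_{j\ge 0}(j+1)a_j=\sum_{j\ge 0}j\,a_j+\sum_{j\ge 0}a_j=\sh_{u}(P)+\up(P)=\sh_{u}(P)+\down(P).
\end{align*}
Substituting $\sh_{u}(P)=\sh_{d}(P)-\down(P)$ into $\area(P)=\sh_{u}(P)+\sh_{h}(P)+\sh_{d}(P)$ produces the claimed identity $\area(P)=2\sh_{d}(P)+\sh_{h}(P)-\down(P)$.

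The only genuinely non-routine point is the height identity $\sh_{d}(P)=\sh_{u}(P)+\down(P)$, i.e. the matching of up steps at height $j$ with down steps at height $j+1$. An alternative to the crossing argument would be a short induction on the length of $P$, peeling off a first return to the $x$-axis and tracking how each statistic changes; I would expect the crossing argument to be cleaner, but the inductive route is a reliable fallback if one prefers to avoid the geometric trapezoid bookkeeping in the first step.
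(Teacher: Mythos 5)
Your proof is correct, but it follows a genuinely different route from the paper's. The paper tiles the region under $P$ with one slanted polygon per down step and per horizontal step --- a polygon of area $2h-1$ for a down step of height $h$ and of area $h$ for a horizontal step of height $h$ --- so the identity is read off directly from that tiling, with up steps contributing no region at all. You instead use the vertical-strip (trapezoid) decomposition, which produces the symmetric intermediate formula $\area(P)=\sh_{u}(P)+\sh_{h}(P)+\sh_{d}(P)$, and then trade $\sh_{u}$ for $\sh_{d}-\down$ via the level-crossing matching of up steps of height $j$ with down steps of height $j+1$. The trade-off is this: the paper's argument is shorter for this one statement, but it must be repeated with a second tiling to obtain Proposition \ref{prop22}, and only then does Corollary \ref{cor21} (which is exactly your matching identity $\sh_{u}(P)=\sh_{d}(P)-\down(P)$) follow by subtraction; your single area computation together with the matching lemma yields Propositions \ref{prop21} and \ref{prop22} and Corollary \ref{cor21} all at once, and proves the corollary bijectively rather than as a formal consequence of two area formulas. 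All the small verifications in your write-up --- the trapezoid areas, $\up(P)=\down(P)$, the absence of down steps of height $0$, and the bookkeeping that a down step's height is the $y$-coordinate of its starting point --- are consistent with the paper's conventions, so the argument stands as written.
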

		\begin{proof}
			The proof is simply based on the following properties.
			\begin{itemize}
				\item If the $i$-th step of $P$ is a down step of height $h$, the area of the polygon $(i-1,h)-(i,h-1)-(i+1-h,0)-(i-1-h,0)$ is $2h-1$.
				\item If the $i$-th step of $P$ is a horizontal step of height $h$, the area of the polygon $(i-1,h)-(i,h)-(i+1-h,0)-(i-h,0)$ is $h$.
			\end{itemize} 
			The area of the path $P$ is equal to the sum of the areas of such polygons corresponding to down and horizontal steps (see  Figure \ref{fig1x}).
			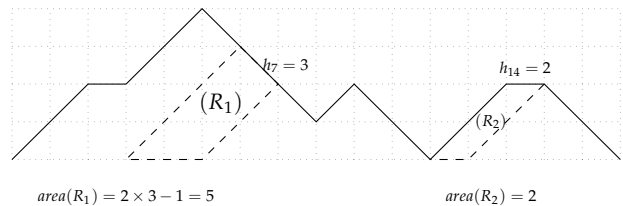
\begin{figure}[h]
				\begin{center}
					\begin{tikzpicture}
					\draw[step=0.5cm, gray, very thin,dotted] (0, 0) grid (8,2);
					\draw[black] (0, 0)--(0.5, 0.5)--(1, 1)--(1.5, 1)--(2, 1.5)--(2.5, 2)--(3, 1.5)--(3.5, 1)--(4, 0.5)--(4.5, 1)--(5, 0.5)--(5.5, 0)--(6, 0.5)--(6.5, 1)--(7, 1)--(7.5, 0.5)--(8, 0);	
					\draw[-,dashed] (3, 1.5)--(3.5, 1)--(2.5, 0)--(1.5, 0)--(3, 1.5);
					\draw[-,dashed] (7, 1)--(6, 0)--(5.5, 0);
					\draw[black] (1.5,-0.5) node[scale=0.5pt] {$area(R_1)=2\times 3-1=5$};
					\draw[black] (3.6, 1.25) node[scale=0.5pt] {$h_7=3$};
					\draw[black] (2.75, 0.75) node[scale=0.7pt] {$(R_1)$};
					\draw[black] (6.75, 1.2) node[scale=0.5pt] {$h_{14}=2$};
					\draw[black] (6.3, 0.5) node[scale=0.5pt] {$(R_2)$};
					\draw[black] (6.3,-0.5) node[scale=0.5pt] {$area(R_2)=2$};
					\end{tikzpicture}
					\caption{Decomposition of the area of a Motzkin path.}\label{fig1x}
				\end{center}
			\end{figure}
		\end{proof}
		By the same way, when we decompose the area statistic according to up and horizontal steps, we obtain the following proposition.
		\begin{proposition}\label{prop22} For any Motzkin path $P$, we have 
			\begin{align}\label{eq11y}
			\area(P)=2\sh_{u}(P)+\sh_{h}(P)+\up(P).
			\end{align}	
		\end{proposition}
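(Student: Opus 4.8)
The plan is to mirror the proof of Proposition \ref{prop21}, decomposing the area under $P$ into one polygon per up step and one per horizontal step, rather than per down and horizontal step. Concretely, to an up step occurring at position $i$ with height $h$ --- that is, a step from $(i-1,h)$ to $(i,h+1)$ --- I would associate the quadrilateral with vertices $(i-1,h)$, $(i,h+1)$, $(i+h+1,0)$, $(i+h-1,0)$, obtained by dropping the two $45^\circ$ diagonals emanating down and to the \emph{right} from the two endpoints of the up step until they reach the $x$-axis. This region is the left--right mirror image of the down-step polygon used in Proposition \ref{prop21}, so by the same shoelace (or trapezoid-area) computation its area is $2h+1$ instead of $2h-1$. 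For horizontal steps the associated parallelogram is exactly the one from Proposition \ref{prop21} and has area $h$.

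First I would carry out this elementary area computation and confirm that an up step of height $h$ contributes $2h+1$. The hard part will be the second step: verifying that these polygons tile the region between $P$ and the $x$-axis exactly, meaning that they cover the whole area and overlap only along their boundaries. In Proposition \ref{prop21} the down-left diagonals produce a clean tiling; here one must check that the symmetric down-right diagonals do the same, which amounts to confirming that along each $45^\circ$ anti-diagonal the up-step and horizontal-step contributions fit together with no gaps, using that $P$ never goes below the $x$-axis. Granting the tiling, summing the contributions gives
\begin{align*}
\area(P)=\sum_{i\in U\!p(P)}\bigl(2h_i(P)+1\bigr)+\sum_{i\in Hor(P)}h_i(P)=2\sh_{u}(P)+\up(P)+\sh_{h}(P),
\end{align*}
which is exactly \eqref{eq11y}.

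As an alternative that avoids re-establishing a tiling, I would derive \eqref{eq11y} directly from Proposition \ref{prop21} by reflection. Let $\bar{P}$ denote the left--right reflection of $P$ (reverse the word and swap each $u$ with $d$); this preserves $\area$ and $\sh_{h}$, exchanges $\up$ and $\down$, and sends each up step of $P$ of height $h$ to a down step of $\bar{P}$ of height $h+1$, so that $\sh_{d}(\bar{P})=\sh_{u}(P)+\up(P)$. Applying Proposition \ref{prop21} to $\bar{P}$ and substituting these identities yields
\begin{align*}
\area(P)=\area(\bar{P})=2\sh_{d}(\bar{P})+\sh_{h}(\bar{P})-\down(\bar{P})=2\sh_{u}(P)+\up(P)+\sh_{h}(P),
\end{align*}
which is again \eqref{eq11y}. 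The only delicate point on this route is the height bookkeeping under reflection, namely the shift by $1$ that turns an up step of height $h$ into a down step of height $h+1$; once that is checked, everything else is routine.
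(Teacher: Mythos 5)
Your first route is exactly the paper's proof: the paper dispatches Proposition \ref{prop22} with the single remark that one decomposes the area ``by the same way'' according to up and horizontal steps, and your trapezoid computation giving $2h+1$ per up step of height $h$ supplies precisely the details the paper omits, so the proposal is correct and essentially identical in approach. Two small remarks: for the tiling to be consistent the horizontal-step parallelogram should also use the down-right diagonals (it is the mirror image of the one in Proposition \ref{prop21}, with the same area $h$), and your reflection argument, with the correctly tracked shift $\sh_{d}(\bar{P})=\sh_{u}(P)+\up(P)$, is a valid self-contained alternative that the paper does not give.
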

		\begin{corollary}\label{cor21}
			For any Motzkin path $P$, we have $\sh_u(P)=\sh_{d}(P)-\down(P)$.
		\end{corollary}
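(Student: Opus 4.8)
The plan is to derive the identity purely algebraically by combining the two expressions for $\area(P)$ already established in Propositions \ref{prop21} and \ref{prop22}. Since both right-hand sides equal $\area(P)$, I would set them equal to obtain
\begin{align*}
2\sh_{d}(P)+\sh_{h}(P)-\down(P)=2\sh_{u}(P)+\sh_{h}(P)+\up(P).
\end{align*}
The common term $\sh_{h}(P)$ cancels immediately, leaving $2\sh_{d}(P)-\down(P)=2\sh_{u}(P)+\up(P)$.

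The one additional fact I need is that every Motzkin path has the same number of up and down steps, i.e.\ $\up(P)=\down(P)$. This is immediate from the fact that $P$ starts and ends on the $x$-axis, so the net change in height is zero; equivalently, one reads it off from the height formula $h_i(P)$, whose final value must return to $0$. Substituting $\up(P)=\down(P)$ into the cancelled identity gives $2\sh_{d}(P)-\down(P)=2\sh_{u}(P)+\down(P)$, hence $2\sh_{d}(P)-2\down(P)=2\sh_{u}(P)$, and dividing by $2$ yields the claimed $\sh_{u}(P)=\sh_{d}(P)-\down(P)$.

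There is essentially no obstacle here: the corollary is a formal consequence of the two preceding propositions together with the balance condition $\up(P)=\down(P)$. The only point worth stating explicitly in the write-up is that balance condition, since it is the single geometric input that turns the algebraic combination of Propositions \ref{prop21} and \ref{prop22} into the desired relation; everything else is cancellation and rearrangement.
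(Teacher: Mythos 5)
Your proof is correct and follows exactly the paper's argument: equating the two expressions for $\area(P)$ from Propositions \ref{prop21} and \ref{prop22} and invoking $\up(P)=\down(P)$. You have simply written out the cancellation steps that the paper leaves implicit.
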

		\begin{proof}
			Using both propositions \ref{prop21} and \ref{prop22} with the obvious fact that $\up(P)=\down(P)$ for any Motzkin path $P$, we obtain the corollary.
		\end{proof}
		We now consider the following polynomial	
			\begin{align*}
			I_{n}(a,b,c,d)=\sum_{P\in \mathcal{M}_n} a^{\hor(P)}b^{\up(P)}c^{\sh_{u}(P)}d^{\sh_{h}(P)},
			\end{align*}	
		and we denote by $\displaystyle I(a,b,c,d;t)=\sum_{n\geq 0}I_n(a,b,c,d)t^n$ the generating function of $I_n(a,b,c,d)$. Before closing this section, we will prove a fundamental identity which allows us to get easily the main results of this paper.
		 
		\begin{theorem}\label{thm21}
			The continued fraction expansion of $I(a,b,c,d;t)$ is
			\begin{align}\label{seq1eq1}
			\frac{1}{1-at-\displaystyle \frac{bt^2}{1-adt-\displaystyle \frac{bct^2}{1-ad^2t-\displaystyle \frac{bc^2t^2}{\ddots}}}}.
			\end{align}
		\end{theorem}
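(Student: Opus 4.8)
The plan is to establish the continued fraction by a first-return decomposition of Motzkin paths that keeps track of \emph{absolute} heights, in the spirit of the combinatorial theory of continued fractions for weighted Motzkin paths. First I would introduce a height-refined generating function. For each integer $h\geq 0$, let
\[
G_h=G_h(a,b,c,d;t):=\sum_{P} a^{\hor(P)}b^{\up(P)}c^{\sh_u(P)}d^{\sh_h(P)}t^{|P|},
\]
where the sum ranges over all Motzkin paths $P$ that start and end at height $h$ and never go below height $h$, all heights being measured from the $x$-axis. With this convention $I(a,b,c,d;t)=G_0$, and the problem reduces to computing $G_0$.

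The key bookkeeping observation is how a single step contributes to the weight according to its absolute height: a horizontal step at height $h$ contributes $a\,d^{h}$ (one to $\hor$ and height $h$ to $\sh_h$), an up step starting at height $h$ contributes $b\,c^{h}$ (one to $\up$ and height $h$ to $\sh_u$), and a down step contributes nothing beyond its unit length. Multiplying every step by $t$ for its length, I would then apply the first-return decomposition at level $h$: a nonempty path counted by $G_h$ either begins with a horizontal step at height $h$ followed by a path counted by $G_h$, or begins with an up step from $h$ to $h+1$, followed by its first excursion strictly above $h$ (a path at base level $h+1$, hence counted by $G_{h+1}$), a down step back to $h$, and finally a path counted by $G_h$. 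Reading off the weights yields the functional recurrence
\[
G_h=1+a\,d^{h}t\,G_h+b\,c^{h}t^{2}\,G_{h+1}G_h,
\]
and solving for $G_h$ gives
\[
G_h=\frac{1}{1-a\,d^{h}t-b\,c^{h}t^{2}\,G_{h+1}}.
\]

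Finally I would iterate this identity from $h=0$ upward: substituting the expression for $G_1$ into that for $G_0$, then $G_2$ into $G_1$, and so on, produces exactly the continued fraction \eqref{seq1eq1}, the level $h$ contributing the partial denominator $1-a\,d^{h}t$ and the partial numerator $b\,c^{h}t^{2}$. The main point to get right is the height bookkeeping in the decomposition, namely that the excursion above level $h$ is itself a path at base level $h+1$ whose four statistics are recorded by $G_{h+1}$ with absolute heights, while the initiating up step contributes precisely $c^{h}$ to $\sh_u$ and the matching down step only its length. Convergence of the infinite continued fraction as a formal power series in $t$ is then automatic, since a Motzkin path of length $n$ attains height at most $\lfloor n/2\rfloor$, so truncating the recurrence at any level exceeding $n/2$ leaves the coefficient of $t^{n}$ in $G_0$ unchanged.
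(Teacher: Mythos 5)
Your proof is correct and follows essentially the same route as the paper: both rest on the first-return decomposition of Motzkin paths and the resulting functional equation $G_h = 1/(1-ad^h t - bc^h t^2 G_{h+1})$. The only difference is bookkeeping --- you track absolute heights via the tower $G_h$, while the paper shifts parameters in a single generating function (note $G_h = I(ad^h, bc^h, c, d; t)$), which is the same argument in different clothing.
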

		
		\begin{proof}
			The proof is based on the usual decomposition of Motzkin paths. Let $P\in \M_n$. Using the first return decomposition, we obtain the following relations:
			\begin{itemize}
				\item if $P=hP'$ for some $P' \in \M_{n-1}$, then we have 
				\begin{align*}
				(\hor,\up,\sh_{u},\sh_{h})P=(1+\hor,\up,\sh_{u},\sh_{h})P'.
				\end{align*}
				\item If $P=uP_1dP_2$ for some $k\geq 2$ such that $P_1\in \M_{k-2}$ and $P_2\in \M_{n-k}$, then we have  
				\begin{align*}
				(\hor,\up,\sh_{u},\sh_{h})P=(\hor,1+\up,\sh_{u}+\up,\sh_{h}+\hor)P_1+(\hor,\up,\sh_{u},\sh_{h})P_2.
				\end{align*}
			\end{itemize}
			These two points lead to the following recurrence for the polynomial $I_{n}(a,b,c,d)$:
			\begin{align}\label{sec1eq2}
			I_n(a,b,c,d)=aI_{n-1}(a,b,c,d)+b\sum_{k=2}^{n}I_{k-2}(ad,bc,c,d)I_{n-k}(a,b,c,d),
			\end{align}
			with $I_0(a,b,c,d)=1$. Thus, when we compute $I(a,b,c,d;t)$, we obtain from \eqref{sec1eq2} the following relation
			\begin{align*}
			I(a,b,c,d;t)	&=\frac{1}{1-at-bt^2I(ad,bc,c,d;t)}.
			\end{align*}
			Developing $I(ad,bc,c,d;t)$ in its turn, we obtain the desired continued fraction expansion for $I(a,b,c,d;t)$.
		\end{proof}
		
		\section{Crossings and nestings over $I_n(4321)$ and $I_n(3412)$}\label{sec3}
		
		In this section, we will establish the proof of the first part of Theorem \ref{main} concerning the recursion for the polynomial distribution of crossings and nestings over the sets $I_n(4321)$ and $I_n(3412)$. For that, we first recall the bijection of Biane \cite{Biane} restricted to the set of involutions that was studied by Barnabei et al.~\cite{Barnabei}. The bijection maps an involution into a Motzkin path whose down steps are labelled with an integer that does not exceed its height, while
		the other steps are unlabelled. Barnabei et al.~\cite{Barnabei} proved the following characterisations:
		\begin{itemize}
			\item An involution $\sigma$ avoids $4321$ if and only if the label of any down step
			in the corresponding Motzkin path is $1$. 
			\item An involution $\sigma$ avoids $3412$ if and only if the label of any down step in the corresponding Motzkin path equals its height.
		\end{itemize}
		Consequently, when restricted to $4321$- and $3412$-avoiding involutions, we can ignore the labels on Motzkin paths. Let us denote by  $\Phi_1$ (resp. $\Phi_2$) the bijection from $\M_n$ to $I_n(4321)$ (resp. $I_n(3412)$).  In this section, we manipulate the bijections $\Phi_1$ and $\Phi_2$, and  we will show how these bijections interchange the statistics $(\fp,\exc,\crs,\nes)$  and  $(\hor,\up,a.\sh_{u},b.\sh_{h})$, where $a$ and $b$ are two arbitrary integers.  
		
		Since the construction of the image of an involution under the maps $\Phi_1$ and $\Phi_2$ is the same and obvious (see \cite{Barnabei}), we just focus on how to get the corresponding involutions from Motzkin paths. Let $P\in \M_{n}$. We will construct $\sigma_1=\Phi_1(P)$ by the following procedure (see Figure \ref{fig21} for graphical illustration).
		\begin{itemize}
			\item From left to right, number the steps of $P$ from $1$ up to $n$.
			\item Read the steps of $P$ from left to right and match the  $k$-th up step with  the $k$-th down step, $k=1,2,\ldots$.
			\item Then, we have
			\begin{itemize}
				\item[{\rm (a)}] $(i)$ is a $1$-cycle of $\sigma_1$, i.e. $\sigma_1(i)=i$, if and only if the step numbered  $i$ is horizontal.
				\item[{\rm (b)}] $(i,j)$ is a $2$-cycle of $\sigma_1$, i.e. $\sigma_1(i)=j$ and  $\sigma_1(j)=i$, if and only if 
				the $i$-th and the $j$-th step are matched.  
			\end{itemize}
		\end{itemize}
		
		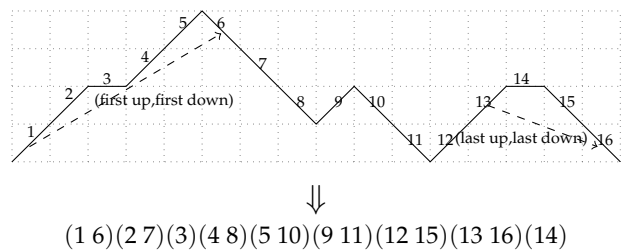
\begin{figure}[h]
			\begin{center}
				\begin{tikzpicture}
				\draw[step=0.5cm, gray, very thin,dotted] (0, 0) grid (8,2);
				\draw[step=0.5cm, gray, very thin,dotted] (0, 0) grid (8,2);
				\draw[black] (0, 0)--(0.5, 0.5)--(1, 1)--(1.5, 1)--(2, 1.5)--(2.5, 2)--(3, 1.5)--(3.5, 1)--(4, 0.5)--(4.5, 1)--(5, 0.5)--(5.5, 0)--(6, 0.5)--(6.5, 1)--(7, 1)--(7.5, 0.5)--(8, 0);	
				\draw[black] (0.25,0.4) node[scale=0.5pt] {$1$};\draw[black] (0.75,0.9) node[scale=0.5pt] {$2$};
				\draw[black] (1.25,1.1) node[scale=0.5pt] {$3$};\draw[black] (1.75,1.4) node[scale=0.5pt] {$4$};
				\draw[black] (2.25,1.85) node[scale=0.5pt] {$5$};\draw[black] (2.75,1.85) node[scale=0.5pt] {$6$};
				\draw[black] (3.3,1.3) node[scale=0.5pt] {$7$};\draw[black] (3.8,0.8) node[scale=0.5pt] {$8$};
				\draw[black] (4.3,0.8) node[scale=0.5pt] {$9$};\draw[black] (4.8,0.8) node[scale=0.5pt] {$10$};
				\draw[black] (5.3,0.3) node[scale=0.5pt] {$11$}; \draw[black] (6.7,1.1) node[scale=0.5pt] {$14$};
				\draw[black] (6.2,0.8) node[scale=0.5pt] {$13$};\draw[black] (7.3,0.8) node[scale=0.5pt] {$15$};
				\draw[black] (5.7,0.3) node[scale=0.5pt] {$12$};\draw[black] (7.8,0.3) node[scale=0.5pt] {$16$};
				\draw[->,dashed] (0.23,0.2)--(2.75,1.7); \draw[black] (2,0.8) node[scale=0.5pt] {(first up,first down)};
				\draw[->,dashed] (6.25, 0.75)--(7.7,0.2); \draw[black] (6.7, 0.3) node[scale=0.5pt] {(last up,last down)};
				
				\draw[black] (4,-0.5) node[scale=1pt] {$\Downarrow$};
				
				\draw[black] (4,-1) node[scale=0.75pt] {$(1\ 6)(2\ 7)(3)(4\ 8)(5\ 10)(9\ 11)(12\ 15)(13\ 16)(14)$};
				\end{tikzpicture}
				\caption{The corresponding $4321$-avoiding involution from a Motzkin path.}\label{fig21}
			\end{center}
		\end{figure}

		Before giving the formulation of $\Phi_2$, we first recall what a tunnel is.  The notion of tunnels on Dyck paths (Dyck paths are Motzkin paths having no horizontal step) is introduced by Elizalde et al.~\cite{ElizD,ElizP} and Barnabey et al.~\cite{Barnabei2} extend the notion in terms of Motzkin paths.  A \textit{tunnel} of a Motzkin path $P$ is a horizontal segment between two distinct lattice points of $P$ that intersects $P$ only at these two points and lies always below $P$. Each tunnel $t$ starts from the starting point of an up step $u$ to the ending point of a down step $d$. We say that the tunnel $t$ matches $u$ and $d$. For example, the Motzkin path in Figure \ref{fig22} has $7$ tunnels drawn in right dashed arrows. In particular, the tunnel $t_3$ matches the up step numbered 4 and the down step numbered 7.  For any Motzkin path $P$, we can construct $\sigma_2=\Phi_2(P)$ by the following procedure (see Figure \ref{fig22}):
		\begin{itemize}
			\item From left to right, number the steps of $P$ from $1$ up to $n$.
			\item Then, we have
			\begin{itemize}
				\item[{\rm (a)}] $(i)$ is a $1$-cycle of $\sigma_2$ if and only if the step numbered  $i$ is horizontal.
				\item[{\rm (b)}] $(i,j)$ is a $2$-cycle of $\sigma_2$ if and only if the up step numbered $i$ is matched with the down step numbered $j$ by a tunnel.
			\end{itemize}
		\end{itemize}
		\begin{figure}[h]
			\begin{center}
				\begin{tikzpicture}
				\draw[step=0.5cm, gray, very thin,dotted] (0, 0) grid (8,2);
				\draw[black] (0, 0)--(0.5, 0.5)--(1, 1)--(1.5, 1)--(2, 1.5)--(2.5, 2)--(3, 1.5)--(3.5, 1)--(4, 0.5)--(4.5, 1)--(5, 0.5)--(5.5, 0)--(6, 0.5)--(6.5, 1)--(7, 1)--(7.5, 0.5)--(8, 0);	
				\draw[->,dashed] (0,0)--(5.5, 0);
				\draw[->,dashed] (0.5, 0.5)--(5, 0.5);
				\draw[->,dashed] (1.5, 1)--(3.5, 1);
				\draw[->,dashed] (2, 1.5)--(3, 1.5);
				\draw[->,dashed] (6, .5)--(7.5, .5);
				\draw[->,dashed] (5.5, 0)--(8, 0);
				
				\draw[black] (0.25,0.4) node[scale=0.5pt] {$1$};\draw[black] (0.75,0.9) node[scale=0.5pt] {$2$};
				\draw[black] (1.25,1.1) node[scale=0.5pt] {$3$};\draw[black] (1.75,1.4) node[scale=0.5pt] {$4$}; \draw[black] (2.5,1.15) node[scale=0.8pt] {$t_3$};
				\draw[black] (2.25,1.85) node[scale=0.5pt] {$5$};\draw[black] (2.75,1.85) node[scale=0.5pt] {$6$};
				\draw[black] (3.3,1.3) node[scale=0.5pt] {$7$};\draw[black] (3.8,0.8) node[scale=0.5pt] {$8$};
				\draw[black] (4.3,0.8) node[scale=0.5pt] {$9$};\draw[black] (4.8,0.8) node[scale=0.5pt] {$10$};
				\draw[black] (5.3,0.3) node[scale=0.5pt] {$11$}; \draw[black] (6.7,1.1) node[scale=0.5pt] {$14$};
				\draw[black] (6.2,0.8) node[scale=0.5pt] {$13$};\draw[black] (7.3,0.8) node[scale=0.5pt] {$15$};
				\draw[black] (5.7,0.3) node[scale=0.5pt] {$12$};\draw[black] (7.8,0.3) node[scale=0.5pt] {$16$};
				
				\draw[black] (4,-0.5) node[scale=1pt] {$\Downarrow$};
				
				\draw[black] (4,-1) node[scale=0.75pt] {$(1\ 11)(2\ 8)(3)(4\ 7)(5\ 6)(9\ 10)(12\ 16)(13\ 15)(14)$};
				\end{tikzpicture}
				\caption{The corresponding $3412$-avoiding involution from a Motzkin path.}\label{fig22}
			\end{center}
		\end{figure}
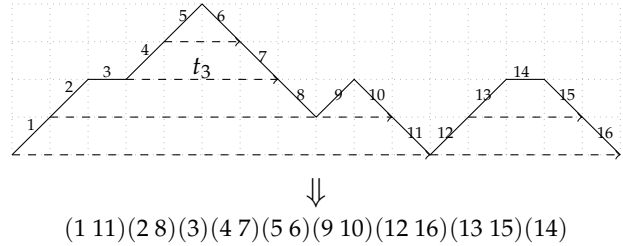
		
		By these definitions, it is not difficult to  show that the bijections $\Phi_1$ and $\Phi_2$ are well defined. 
		\begin{proposition}\label{prop31}
			Let $P$ be a Motzkin path. Assume that $\sigma_1=\Phi_1(P)$ and  $\sigma_2=\Phi_2(P)$. We have the following properties
			\begin{itemize}
				\item[{\rm (i)}] $(\crs,\nes)(\sigma_1)=(2\sh_{u},\sh_{h})(P)$.
				\item[{\rm (ii)}] $\nes(\sigma_2)=(2\sh_{u}+\sh_{h})(P)$.
			\end{itemize}
		\end{proposition}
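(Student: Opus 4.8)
The plan is to pass to the \emph{arc diagram} of an involution: draw each $2$-cycle $(a,b)$ with $a<b$ as an arc joining $a$ and $b$, and each fixed point as an isolated vertex. First I would translate the four clauses defining $\crs$ and $\nes$ into geometric statements by a direct check of the inequalities. One finds that two arcs $(a,b)$ and $(c,d)$ with $a<c$ contribute exactly $2$ to $\crs$ precisely when they interleave ($a<c<b<d$): once through the pair of left endpoints $(a,c)$ via the clause $i<j<\sigma(i)<\sigma(j)$, and once through the pair of right endpoints $(b,d)$ via the clause $\sigma(i)<\sigma(j)\le i<j$. Symmetrically they contribute exactly $2$ to $\nes$ precisely when they are nested ($a<c<d<b$). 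A fixed point $f$ contributes to $\nes$ exactly when it lies strictly under an arc $(a,b)$, i.e.\ $a<f<b$, in which case the pair $(f,b)$ satisfies $\sigma(f)=f$ and $\sigma(b)<\sigma(f)\le f<b$, producing a single nesting; one also verifies that disjoint arcs, a fixed point lying outside every arc, a pair of fixed points, and any arc/fixed-point pair never produce a crossing. Thus $\crs$ counts twice the interleaving arc pairs, while $\nes$ counts twice the nested arc pairs plus the number of (fixed point, covering arc) incidences.

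For part~(i) I would use that $\Phi_1$ matches the $k$-th up step with the $k$-th down step, so the left and right endpoints of the arcs occur in the same relative order; hence no two arcs can nest, and two arcs either interleave or are disjoint. I then read off crossings from the height formula of Section~\ref{sec2}: at the start of the $k$-th up step the height equals the number of arcs open there, and because the matching preserves order each such open arc interleaves with the new arc $(u_k,d_k)$. Therefore the number of interleaving pairs equals $\sum_{i\in U\!p(P)}h_i(P)=\sh_{u}(P)$, giving $\crs(\sigma_1)=2\sh_{u}(P)$. Likewise the height of a horizontal step at position $f$ equals the number of arcs $(a,b)$ with $a<f<b$ covering $f$, each contributing one nesting, so $\nes(\sigma_1)=\sum_{i\in Hor(P)}h_i(P)=\sh_{h}(P)$.

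For part~(ii) the only change is the combinatorics of the tunnel matching used by $\Phi_2$: tunnels produce \emph{nested} arches, so now no two arcs interleave and two arcs either nest or are disjoint. At the start of the $k$-th up step the currently open arches all strictly contain the new arch (to close, the path must return below the current level), so the number of nested arc pairs equals $\sh_{u}(P)$ and contributes $2\sh_{u}(P)$ to $\nes(\sigma_2)$. The fixed-point incidences are counted exactly as before and yield a further $\sh_{h}(P)$, whence $\nes(\sigma_2)=2\sh_{u}(P)+\sh_{h}(P)$.

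The main obstacle is the first step: the definitions of $\crs$ and $\nes$ are purely order-theoretic and each splits into two clauses, so the delicate point is to pin down the exact multiplicities — that each interleaving (resp.\ nested) arc pair is counted \emph{twice}, once on the excedance side and once on the deficiency side, and that the fixed-point nestings arise \emph{solely} from the ``$\le$'' branch $\sigma(j)<\sigma(i)\le i<j$ with $\sigma(i)=i$. Once these multiplicities are established, the rest reduces to the standard observation that the height of any step equals the number of arcs open at that step, which converts the arc counts directly into $\sh_{u}(P)$ and $\sh_{h}(P)$.
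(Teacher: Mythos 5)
Your proof is correct and follows essentially the same route as the paper: your arc-diagram multiplicity analysis (each interleaving pair counted twice in $\crs$, each nested pair twice in $\nes$, each fixed-point-under-arc incidence once) is exactly the content of the paper's opening formulas for $\crs$ and $\nes$ of an involution, and both arguments then convert the arc counts into sums of step heights. The only cosmetic differences are that you count crossings by summing heights over up steps directly, whereas the paper sums over down steps and invokes Corollary \ref{cor21} to turn $\sh_{d}-\down$ into $\sh_{u}$, and that you derive the absence of nested (resp.\ interleaving) arcs from the matching rules of $\Phi_1$ (resp.\ $\Phi_2$) rather than from the pattern-avoidance properties.
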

		\begin{proof}
			Let $\sigma$ be an involution. By definition, we have
			\begin{align*}
			\nes(\sigma)&=2\sum_{\sigma(i)>i}|\{j<i|\sigma(j)>\sigma(i)\}| +\sum_{\sigma(i)=i}|\{j>i|\sigma(j)<i\}|,\\
			\text{ and } \crs(\sigma)&=2\sum_{\sigma(i)>i}|\{i<j<\sigma(i)|\sigma(j)>\sigma(i)\}|.
			\end{align*}
			Notice first that the following properties are obvious:
			\begin{itemize}
				\item[{\rm (a)}] if $\sigma$ is $4321$-avoiding, then we have $|\{j<i|\sigma(j)>\sigma(i)\}|=0$ for all $i$ such that $\sigma(i)>i$. 
				\item[{\rm (b)}]  if $\sigma$ is $3412$-avoiding, then we have $\crs(\sigma)=0$.
			\end{itemize}
			Let us suppose that $\sigma_1=\Phi_1(P)$ for some Motzkin path $P$. According to (a),  we have  
			\begin{align*}
			\nes(\sigma_1)&=\sum_{\sigma_1(i)=i}|\{j>i|\sigma_1(j)>i\}|\\
			&=\sum_{\sigma_1(i)=i}|\{k<i|\sigma_1(k)>i\}|\\
			&=\sum_{i\in Hor(P)} (|P(1,i)|_u-|P(1,i)|_d), \\
			&= \sh_{h}(P).
			\end{align*}
			Moreover, according to the given definition of $\Phi_1$, we have $|\{i<j<\sigma_1(i)|\sigma_1(j)>\sigma_1(i)\}|=|P(1,\sigma_1(i))|_{u}-|P(1,\sigma_1(i))|_{d}=h_{\sigma_1(i)}(P)-1$ (with $\sigma_1(i)$ as down step of $\sigma_1$). Consequently, we get
			\begin{align*}
			\crs(\sigma_1)&=2\sum_{\sigma_1(i)>i}|\{i<j<\sigma_1(i)|\sigma_1(j)>\sigma_1(i)\}|\\
			&=2\sum_{\sigma_1(i)>i}(h_{\sigma_1(i)}(P)-1)\\
			&=2\sum_{l\in Down(P)}(h_{l}(P)-1)\\
			&=2(\sh_{d}(P)-\down(P))\\
			&=2\sh_{u}(P) ~~~(\text{ see Corollary \ref{cor21}}).
			\end{align*}
			This ends the proof of (i).  To prove (ii), we now suppose that $\sigma_2=\Phi_2(P)$ for some Motzkin path $P$. For any excedance $i$ of $\sigma_2$, we have $|\{j<i|\sigma_2(j)>\sigma_2(i)\}|=|P(1,i))|_{u}-|P(1,i)|_{d}=h_i(P)$ and $i \in U\!p(P)$.
			\begin{align*}
			\nes(\sigma_2)&=2\sum_{\sigma_2(i)>i}|\{j<i|\sigma_2(j)>\sigma_2(i)\}| +\sum_{\sigma(i)=i}|\{j>i|\sigma_2(j)>i\}|,\\
			&=2\sum_{i\in U\!p(P)}h_i(P) +\sum_{i \in Hor(P)}h_i(P),\\
			&=2\sh_{u}(P)+\sh_{h}(P).
			\end{align*}
			This also ends the proof of (ii). Thus, Proposition \ref{prop31} follows.
		\end{proof}
		We now have all necessary tools to prove the following results concerning the joint distribution of the statistics $\fp$, $\exc$, $\crs$ and $\nes$ over our sets of 4321- and 3412-avoiding involutions.
		\begin{theorem} \label{thm31} We have the following identities
			\begin{align}\label{eqthm1}
			\sum_{\sigma \in I(4321)}x^{\fp(\sigma)}y^{\exc(\sigma)}p^{\crs(\sigma)}q^{\nes(\sigma)}t^{|\sigma|}&=\frac{1}{1-xt-\displaystyle \frac{yt^2}{1-xqt-\displaystyle \frac{yp^2t^2}{1-xq^2t-\displaystyle \frac{yp^4t^2}{\ddots}}}},
			\end{align}
			and
			\begin{align}\label{eqthm2}
			\sum_{\sigma\in I(3412)} x^{\fp(\sigma)}y^{\exc(\sigma)}q^{\nes(\sigma)}t^{|\sigma|}&=\frac{1}{1-xt-\displaystyle \frac{yt^2}{1-xqt-\displaystyle\frac{yq^2t^2}{1-xq^2t-\displaystyle\frac{yq^4t^2}{\ddots}}}}.
			\end{align}
		\end{theorem}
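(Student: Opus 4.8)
The plan is to derive both identities purely as specializations of the master generating function $I(a,b,c,d;t)$ from Theorem \ref{thm21}, by transporting the four permutation statistics through the bijections $\Phi_1$ and $\Phi_2$ and reading off the resulting continued fraction. Proposition \ref{prop31} already supplies the images of $\crs$ and $\nes$, so the only preparatory work is to record the images of $\fp$ and $\exc$, after which everything reduces to a bookkeeping of substitutions.

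First I would pin down $\fp$ and $\exc$. From the construction of $\Phi_1$ (and identically for $\Phi_2$), an index $i$ is a fixed point exactly when the step numbered $i$ is horizontal, so $\fp(\sigma)=\hor(P)$; and each $2$-cycle $(i,j)$ with $i<j$ contributes exactly one excedance, namely $i$, while $2$-cycles are in bijection with the up steps of $P$, so $\exc(\sigma)=\up(P)$. Combining this with Proposition \ref{prop31} yields the full correspondences
\[
(\fp,\exc,\crs,\nes)(\Phi_1(P))=(\hor,\up,2\sh_{u},\sh_{h})(P),\qquad (\fp,\exc,\nes)(\Phi_2(P))=(\hor,\up,2\sh_{u}+\sh_{h})(P).
\]
Since $\Phi_1$ is a bijection from $\M_n$ onto $I_n(4321)$, and writing $p^{\crs}=p^{2\sh_{u}}=(p^{2})^{\sh_{u}}$, the first generating function becomes
\[
\sum_{\sigma\in I(4321)}x^{\fp(\sigma)}y^{\exc(\sigma)}p^{\crs(\sigma)}q^{\nes(\sigma)}t^{|\sigma|}=\sum_{n\ge0}\sum_{P\in\M_n}x^{\hor(P)}y^{\up(P)}(p^{2})^{\sh_{u}(P)}q^{\sh_{h}(P)}t^{n}=I(x,y,p^{2},q;t).
\]
Likewise, using $q^{\nes}=q^{2\sh_{u}+\sh_{h}}=(q^{2})^{\sh_{u}}q^{\sh_{h}}$, the bijection $\Phi_2$ gives
\[
\sum_{\sigma\in I(3412)}x^{\fp(\sigma)}y^{\exc(\sigma)}q^{\nes(\sigma)}t^{|\sigma|}=\sum_{n\ge0}\sum_{P\in\M_n}x^{\hor(P)}y^{\up(P)}(q^{2})^{\sh_{u}(P)}q^{\sh_{h}(P)}t^{n}=I(x,y,q^{2},q;t).
\]

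Finally I would invoke Theorem \ref{thm21} and specialize the expansion \eqref{seq1eq1}. Setting $(a,b,c,d)=(x,y,p^{2},q)$, the successive numerators $bt^{2},bct^{2},bc^{2}t^{2},\dots$ become $yt^{2},yp^{2}t^{2},yp^{4}t^{2},\dots$ and the denominator shifts $1-at,1-adt,1-ad^{2}t,\dots$ become $1-xt,1-xqt,1-xq^{2}t,\dots$, which is precisely \eqref{eqthm1}; the substitution $(a,b,c,d)=(x,y,q^{2},q)$ produces numerators $yt^{2},yq^{2}t^{2},yq^{4}t^{2},\dots$ with the same denominator shifts, giving \eqref{eqthm2}. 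There is no real obstacle here beyond one point demanding care: because $\crs=2\sh_{u}$ and $\nes=2\sh_{u}+\sh_{h}$ carry the factor $2$ on the exponent of $\sh_{u}$, the parameter $c$ that tracks $\sh_{u}$ in $I(a,b,c,d;t)$ must be set to $p^{2}$ (respectively $q^{2}$) rather than $p$ (respectively $q$); once this matching is made correctly, the two continued fractions coincide termwise with the claimed expansions, completing the proof.
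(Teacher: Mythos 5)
Your proposal is correct and follows essentially the same route as the paper: establish that $\Phi_1$ and $\Phi_2$ send $(\fp,\exc)$ to $(\hor,\up)$, combine with Proposition \ref{prop31} to identify the generating functions with $I(x,y,p^{2},q;t)$ and $I(x,y,q^{2},q;t)$, and then specialize the continued fraction \eqref{seq1eq1}. The only difference is that you spell out the fixed-point/excedance correspondence that the paper dismisses as obvious, which is a harmless (indeed welcome) addition.
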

		\begin{proof} It is obvious to see from their definitions that the bijections $\Phi_1$ and $\Phi_2$  interchange the statistics $(\fp,\exc)$ and $(\hor,\up)$ (or  $(\hor,\down)$). So, using Proposition \ref{prop31}, we get
			\begin{itemize}
				\item $(\hor,\up,2\sh_u,\sh_h)\stackrel{\Phi_1}{\longrightarrow}(\fp,\exc,\crs,\nes) $
				\item and  $(\hor,\up,2\sh_u+\sh_h)\stackrel{\Phi_2}{\longrightarrow}(\fp,\exc,\nes)$
			\end{itemize}
			Consequently, we have
			\begin{align*}
			\sum_{\sigma \in I_n(4321)}x^{\fp(\sigma)}y^{\exc(\sigma)}p^{\crs(\sigma)}q^{\nes(\sigma)} &=I_n(x,y,p^2,q),\\
			\text{ and } \sum_{\sigma \in I_n(3412)}x^{\fp(\sigma)}y^{\exc(\sigma)}q^{\nes(\sigma)} &=I_n(x,y,q^2,q).
			\end{align*}
			So,  we obtain easily identity \eqref{eqthm1} (resp. \eqref{eqthm2}) from \eqref{seq1eq1},  by setting $a=x$, $b=y$, $c=p^2$ (resp. $c=q^2$) and $d=q$. This ends the proof of Theorem \ref{thm31}.
		\end{proof}
		
		Now, to close this section, we will prove the first identities of the main result presented in the introduction (see. Theorem \ref{main}).
		\begin{theorem}\label{thm33}
			We have $\displaystyle \sum_{\sigma \in I_n(4321)}q^{(\crs+\nes)(\sigma)}=\sum_{\sigma \in I_n(4312)}q^{\crs(\sigma)}=M_n(q)$.
		\end{theorem}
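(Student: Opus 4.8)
The plan is to read both equalities off the continued-fraction machinery of Section~\ref{sec2}, after first putting the generating function $M(q;t):=\sum_{n\geq 0}M_n(q)t^n$ into continued-fraction form. Starting from the recurrence \eqref{M1} and writing the weighted convolution as a product, the term $\sum_{k}q^{k}M_k(q)M_{n-2-k}(q)$ contributes $t^2M(q;qt)M(q;t)$, so that
\begin{align*}
M(q;t)=\frac{1}{1-t-t^2M(q;qt)}.
\end{align*}
This relation is self-similar: replacing $t$ by $qt$, then by $q^2t$, and so on, and substituting repeatedly yields
\begin{align*}
M(q;t)=\cfrac{1}{1-t-\cfrac{t^2}{1-qt-\cfrac{q^2t^2}{1-q^2t-\cfrac{q^4t^2}{\ddots}}}},
\end{align*}
whose partial denominators are $1-q^kt$ and whose partial numerators are $q^{2k}t^2$ ($k\geq 0$).

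For the first equality I would specialize Theorem~\ref{thm31}. Setting $x=y=1$ and $p=q$ in \eqref{eqthm1} collapses the weight $p^{\crs}q^{\nes}$ to $q^{\crs+\nes}$, and the resulting continued fraction has partial denominators $1-q^kt$ and partial numerators $q^{2k}t^2$ --- exactly the expansion of $M(q;t)$ obtained above. Comparing coefficients of $t^n$ then gives $\sum_{\sigma\in I_n(4321)}q^{(\crs+\nes)(\sigma)}=M_n(q)$.

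For the second equality I would run the same template with $I_n(4312)$ in place of $I_n(4321)$. Concretely, the plan is to produce a Biane-type bijection $\Phi_3:\M_n\to I_n(4312)$ together with a transfer statement of the form $\crs(\Phi_3(P))=(2\sh_u+\sh_h)(P)$; feeding this into Theorem~\ref{thm21} at $a=b=1$ and $c=d=q$ returns precisely the continued fraction for $M(q;t)$, so that the coefficient of $t^n$ is $M_n(q)$. Since for involutions pattern avoidance is stable under taking inverses, one has $I_n(4312)=I_n(3421)$, which could be used to locate the correct labelled-path model.

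The main obstacle is exactly this last input. Section~\ref{sec3} builds the bijections $\Phi_1,\Phi_2$ and computes $\crs,\nes$ on them in Proposition~\ref{prop31}, but it supplies neither a Motzkin-path bijection for $I_n(4312)$ nor a Barnabei-type description of which labels are forced by avoidance of $4312$. Thus the crux is to pin down the path statistic to which $\crs$ corresponds over $I_n(4312)$ and to verify that its generating function is again the continued fraction for $M(q;t)$; once that correspondence is in hand, the coefficient comparison closes the argument exactly as in the first equality.
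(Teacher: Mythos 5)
Your treatment of the first equality is correct and is essentially the paper's own argument run in the opposite direction: the paper introduces the continued fraction $A(q;t)$ (which is the $x=y=1$, $p=q$ specialization of \eqref{eqthm1}), derives the functional equation $A(q;t)=1/\bigl(1-t-t^2A(q;qt)\bigr)$, and extracts coefficients to recover the recurrence \eqref{M1}; you instead derive the continued fraction from \eqref{M1} and compare. Either direction closes that half of the theorem.

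The second half of your proposal has a genuine gap, and its root cause is that you took a misprinted statement literally. Comparing with Theorem \ref{main} (and with the fact that the paper's proof of this theorem invokes \emph{both} \eqref{eqthm1} and \eqref{eqthm2}), the intended second sum is $\sum_{\sigma\in I_n(3412)}q^{\nes(\sigma)}$; the ``$I_n(4312)$'' and the exponent ``$\crs$'' are typos (the paper's own proof carries the same slip, writing $q^{\crs}$ over $I(3412)$, even though $\crs$ vanishes identically on $I_n(3412)$ by observation (b) in the proof of Proposition \ref{prop31}). The literal claim you set out to prove is in fact false, so the ``main obstacle'' you identify --- constructing a Biane-type bijection $\M_n\to I_n(4312)$ transferring $\crs$ to $2\sh_u+\sh_h$ --- cannot be overcome: a pattern of length $4$ occurs in a permutation of length $4$ only as the permutation itself, and $4312$ is not an involution, so $I_4(4312)=I_4$ has $10$ elements while $M_4(1)=M_4=9$; no such bijection exists even at the level of cardinalities. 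Once the statement is read as in Theorem \ref{main}, the missing half requires no new bijection at all: setting $x=y=1$ in \eqref{eqthm2} produces exactly the same continued fraction $A(q;t)$ --- partial denominators $1-q^kt$, partial numerators $q^{2k}t^2$ --- as the specialization of \eqref{eqthm1} you already carried out, and your coefficient comparison then gives $\sum_{\sigma\in I_n(3412)}q^{\nes(\sigma)}=M_n(q)$ verbatim.
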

		\begin{proof}Let us denote by 
			\begin{align*}
			A(q;t)=\frac{1}{1-t-\displaystyle \frac{t^2}{1-qt-\displaystyle \frac{q^2t^2}{1-q^2t-\displaystyle \frac{q^4t^2}{\ddots}}}}.
			\end{align*}	
			Simple manipulation of the  continued fraction expansion of $A(q;t)$ leads to the following functional equation
			\begin{align*}
			A(q;t)=\frac{1}{1-t-t^2A(q;qt)}.
			\end{align*}
			This is also equivalent to the following one
			\begin{align*}
			A(q;t)=1+tA(q;t)+t^2A(q;t)A(q;qt).
			\end{align*}
			When we extract the coefficient $A_n(q)$ of $t^n$ on both sides of this equation, we obtain 
			\begin{equation*}
			A_0(q)=1, A_n(q) = A_{n-1}(q) +  \sum_{k=0}^{n-2}  q^kA_{k}(q)A_{n-2-k}(q)  \text{ for } n \geq 1. 
			\end{equation*}
			Since the polynomials $A_n(q)$ and $M_n(q)$ have the same recurrence (see recursion \eqref{motz}) and the same initial values, so they are the same. Furthermore, 	from \eqref{eqthm1} and \eqref{eqthm2}, we obtain
			\begin{align*}\label{eqthm3}
			\sum_{\sigma \in I(4321)}q^{(\crs+\nes)(\sigma)}t^{|\sigma|}=\sum_{\sigma \in I(3412)}q^{\crs(\sigma)}t^{|\sigma|}=A(q;t).
			\end{align*}
			Thus, we get Theorem \ref{thm33}.
		\end{proof}

		\section{Crossings over $S_n(321,3\bar{1}42)$}\label{sec4}
		
		The main goal of this section is to prove the second part of Theorem \ref{main} concerning the distribution of crossings over $S_n(321,3\bar{1}42)$. For that, we will use the bijection of Chen et al.~\cite{Chen2} based on reduced decomposition of permutations and strip decomposition of Motzkin paths.
		
		A \textit{transposition} $s_i =(i\  i+1)$ (i.e. $2$-cycle) is a map from $S_n$ to itself which interchanges the numbers in the $i$th and $(i+1)$th position in a permutation. For example, $s_3(623514) = 625314$. Every permutation $\sigma$ in $S_n$ can be represented as a sequence of transpositions $s_{i_1}s_{i_2}\cdots s_{i_k}$ which, when applied from right to left to the identity permutation $123\cdots n$, results in $\sigma$. Such representation is not necessarily unique. For example, $321$ can be written as both $s_1s_2s_1$ and $s_2s_1s_2$. In fact, the product of simple transpositions satisfies the Braid relations:
		\begin{align*}
		s_{i+1}s_is_{i+1} = s_is_{i+1}s_i \text{ and } s_is_j = s_js_i,\text{ where }  |i - j|\neq 1.
		\end{align*}
		Let $\sigma \in S_n$. We now determine an unique representation corresponding to $\sigma$, called the \textit{canonical reduced decomposition} of $\sigma$,  by a specific procedure. For that, we denote by $\mathcal{R}(\sigma)$ the set of the pairs resulting from the following procedure and initiate it to the empty set. 
		\begin{enumerate}
			\item If $\sigma$ is the identity, then the corresponding canonical reduced decomposition of $\sigma$ is the identity. Otherwise, we let $\sigma^{(0)}:=\sigma$ and we go to the next step.
			\item Locate the pair $(n_1,i_1)$, where $n_1$ is the greatest excedance value of $\sigma^{(0)}$  and $i_1$  its position (i.e. $i_1<n_1=\sigma^{(0)}(i_1)$ since $i_1$ is an excedance of $\sigma^{(0)}$).  Move $n_1$ from $i_1$ to position $n_1$ by applying from right to left the sequence $s_{n_1-1}s_{n_1-2}\cdots s_{i_1+1}s_{i_1}$ and leave the relative order of the other numbers unchanged.  Let $\sigma^{(1)}$ be the resulting permutation. Then, add the pair $(n_1-1,i_1)$ to $\mathcal{R}(\sigma)$ and pass to the next step.
			\item Return to the first step and apply the procedure to $\sigma^{(1)}$.	
		\end{enumerate}
		By this manner, if we have $\mathcal{R}(\sigma)=\{(h_1,t_1),(h_{2},t_{2}),\ldots,(h_k,t_k)\}$, i.e. $(h_{k+1-j},t_{k+1-j})=(n_j-1,i_j)$ for any $1\leq j\leq k$, then the  canonical reduced decomposition of $\sigma$ is $\sigma_1\sigma_2\cdots\sigma_k$, where $\sigma_{j}=s_{h_j}s_{h_j-1}\cdots s_{t_j}$ for any $1 \leq j\leq  k$. It is known that such decomposition is unique. We have to notice that the above algorithm is none other than that of Chen et al.\cite{Chen2}. We just modified it to emphasize the notion of excedance. In Chen et al.~\cite{Chen2}, the set $\mathcal{R}(\sigma)$ is known as the set of pairs (\textit{head,\textit{tail}}) and they showed the following characterisation.
		\begin{theorem} {\rm \cite[Thm. 2.3]{Chen2}}\label{chen1}  Let $\sigma \in S_n$ and $\mathcal{R}(\sigma)=\{(h_1,t_1),(h_{2},t_{2}),\ldots,(h_k,t_k)\}$. We have $\sigma \in S_n(321,3\bar{1}42)$ if and only if $t_{j-1}+2\leq t_{j}$ for $1< j\leq  k$.
		\end{theorem}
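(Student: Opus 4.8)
The plan is to prove the characterization by induction mirroring the defining procedure, peeling off one pair of $\mathcal{R}(\sigma)$ at a time. The engine is a structural dictionary: I will show that for a $321$-avoiding permutation the tails $t_1<t_2<\cdots<t_k$ are exactly the excedance positions of $\sigma$ listed in increasing order, so that $t_{j-1}+2\le t_j$ becomes the transparent statement that consecutive excedances of $\sigma$ are separated by at least one non-excedance position. Granting this dictionary, the theorem splits into two claims about $\sigma$: that $321$-avoidance is equivalent to strict decrease of the processed positions, and that, among $321$-avoiders, $3\bar{1}42$-avoidance holds exactly when no two adjacent positions are simultaneously excedances.

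First I would set up the dictionary and the $321$ half. In any $321$-avoiding permutation the excedance tops increase with position: if $e<e'$ are both excedances then $\sigma(e)<\sigma(e')$, for otherwise $\sigma(e)>\sigma(e')>e'$ and a count of the $\sigma(e')-1\ge e'$ values below $\sigma(e')$ shows one of them sits to the right of $e'$, yielding a $321$. Hence the greatest excedance value $v=n_1$ occupies the rightmost excedance position $p=i_1$. A second observation, again by exhibiting a forbidden $321$, is that the positions $p+1,\dots,v$ carry only deficiencies; so moving $v$ to position $v$ (the single step of the procedure) turns $v$ into a fixed point, leaves every other excedance untouched in position and value, and creates no new excedance. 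Thus $\sigma^{(1)}$ is again $321$-avoiding with excedances $e_1<\cdots<e_{r-1}$, and by induction the processed positions are $e_r>e_{r-1}>\cdots>e_1$; reindexing gives $t_j=e_j$, the dictionary. Conversely, whenever the processed positions fail to strictly decrease the same local analysis locates a length-three decreasing subsequence, so strict decrease of the $t_j$ is equivalent to $321$-avoidance.

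It remains to analyse the barred pattern, which is where the gap of two enters. By definition $\sigma$ avoids $3\bar{1}42$ precisely when every occurrence $\sigma(a)\sigma(b)\sigma(c)$ of $231$ (so $a<b<c$ and $\sigma(c)<\sigma(a)<\sigma(b)$) can be completed to a $3142$, i.e.\ admits $a<l<b$ with $\sigma(l)<\sigma(c)$. In a $321$-avoider the intermediate values are pinned: for $a<l<c$ one cannot have $\sigma(a)>\sigma(l)>\sigma(c)$, so each $\sigma(l)$ is either $>\sigma(a)$ or $<\sigma(c)$; hence a $231$ fails to extend exactly when every entry strictly between its first two positions exceeds $\sigma(a)$. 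Sliding $b$ leftward through such entries shrinks any failing $231$ to one with $b=a+1$, so $\sigma$ fails to avoid $3\bar{1}42$ iff there is an adjacent ascent $\sigma(a)<\sigma(a+1)$ with a later entry below $\sigma(a)$. I would then match this to adjacent excedances: a counting argument shows two consecutive excedance positions $a,a+1$ always carry a later value below $\sigma(a)$ (a non-extending $231$), while conversely a non-extending $231$ forces, via the same forbidden-$321$ bookkeeping, two consecutive positions to be excedances. In inductive terms, $\sigma$ inherits $3\bar{1}42$-avoidance from $\sigma^{(1)}$ exactly when the newly exposed excedance satisfies $e_r\ge e_{r-1}+2$, i.e.\ $t_{k-1}+2\le t_k$.

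\textbf{Main obstacle.} The delicate step is the backward direction of the barred-pattern analysis: showing that the gap condition $e_r\ge e_{r-1}+2$ together with $3\bar{1}42$-avoidance of $\sigma^{(1)}$ rules out \emph{every} non-extending $231$ in $\sigma$, not merely those supported on adjacent excedances. This demands the careful position bookkeeping induced by sliding $v$ to its fixed point (a $231$ of $\sigma^{(1)}$ and the small entry completing it must be tracked back through the shift), together with the structural fact that $v$, being the largest excedance value at the rightmost excedance, can only play the ``$3$'' of a would-be $231$; closing these cases and verifying that no long-range failing $231$ survives is the crux.
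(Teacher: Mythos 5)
First, note that the paper does not prove this statement at all: it is quoted verbatim from Chen, Deng and Yang \cite[Thm.~2.3]{Chen2}, so there is no in-paper proof to compare against. Judged on its own terms, your forward direction is in good shape: the dictionary (for a $321$-avoiding $\sigma$, the procedure peels off excedances from right to left, so $t_1<\cdots<t_k$ are exactly the excedance positions) is correct and essentially re-proves the paper's Theorem \ref{mazchen} without circularity, and the reduction of $3\bar{1}42$-containment among $321$-avoiders to the existence of two adjacent excedances is sound. In fact the obstacle you flag as the crux is not one: if $a,a+1,c$ were needed, take any $231$-occurrence $(a,b,c)$; both $a$ and $b$ are excedances, so $b\geq a+2$, position $a+1$ is a non-excedance, hence $\sigma(a+1)\leq a+1\leq\sigma(a)$, and the dichotomy you already established forces $\sigma(a+1)<\sigma(c)$, so $a+1$ itself completes the $3142$. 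No tracking through the shift is needed.

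The genuine gap is in the converse implication, namely that the gap condition $t_{j-1}+2\leq t_j$ forces $\sigma$ to avoid $321$. Your sentence ``whenever the processed positions fail to strictly decrease the same local analysis locates a length-three decreasing subsequence'' asserts $\neg B\Rightarrow\neg A$ where $A$ is $321$-avoidance and $B$ is strict decrease of the processed positions; that is merely the contrapositive of the implication $A\Rightarrow B$ you have already proved, not the converse $B\Rightarrow A$ that the theorem requires. Worse, the ``same local analysis'' is unavailable here: every step of your induction (largest excedance value sits at the rightmost excedance, the shifted window contains no fixed points so no new excedances are created, $\sigma^{(1)}$ stays $321$-avoiding) uses $321$-avoidance as a hypothesis. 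For $\sigma=51243$, which contains $321$, the first move creates a brand-new excedance and the processed positions actually move rightward; showing that such a violation \emph{always} occurs for a $321$-containing input is precisely what is missing and cannot be read off your argument. The standard repair is to prove the converse constructively: given any pair set satisfying $t_{j-1}+2\leq t_j$ (together with $h_{j-1}<h_j$ and $t_j\leq h_j$), the unique permutation with that canonical reduced decomposition sends $t_j\mapsto h_j+1$ and is increasing on the remaining positions, a form that visibly avoids $321$ and has no adjacent excedances, whence it avoids $3\bar{1}42$ by your criterion.
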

		Here are some examples, 
		\begin{itemize}
			\item the canonical reduced decomposition  of the identity $id=1 2\cdots n$ is the identity since $\mathcal{R}(id)=\emptyset$;
			\item the canonical reduced decomposition  of $\pi=n\ n-1\cdots 2 1$ is $\pi$ is $\pi= s_1/s_2s_1/$ $s_3s_2s_1/ \cdots /s_{n-1}s_{n-2}\cdots s_{1}$ since $\mathcal{R}(\pi)=\{(1,1),(2,1),\ldots,(n-1,1)\}$;
			\item if $\sigma=6~1~7~2~3~8~4~10~5~11~9~15~12~16~13~14$, we have \begin{align*}
			\mathcal{R}(\sigma)=\{(5,1),(6,3),(7,6),(9,8),(10,10),(14,12),(15,14)\}.
			\end{align*} 
			Thus, the canonical reduced decomposition of $\sigma$ is $\sigma_1\sigma_2\cdots\sigma_7$, with	$\sigma_1=s_{5}s_{4}s_{3}s_{2}s_{1}$, $\sigma_2=s_{6}s_{5}s_{4}s_{3}$, $\sigma_3=s_{7}s_{6}$, $\sigma_4=s_{9}s_{8}$, $\sigma_5=s_{10}$, $\sigma_6=s_{14}s_{13}s_{12}$, and $\sigma_7=s_{15}s_{14}$.
		\end{itemize}
		The following theorem is an extension of \cite[Thm. 2.4]{Chen2}.  
		\begin{theorem}{\rm \cite{Chen2}}\label{mazchen}  For any $\sigma \in S_n(321,3\bar{1}42)$, if   $\mathcal{R}(\sigma)=\{(h_r,t_r)\}_{1\leq r\leq k}$, then we have $Des(\sigma)=Exc(\sigma)=\{t_1, t_{2},\ldots,t_k\}$.
		\end{theorem}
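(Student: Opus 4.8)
The plan is to establish the two equalities $Exc(\sigma)=\{t_1,\dots,t_k\}$ and $Des(\sigma)=Exc(\sigma)$ separately, the second being the genuinely new part of the statement. Both rest on the classical structural description of $321$-avoiding permutations: if $\sigma$ avoids $321$, then the values read at its excedance positions form an increasing sequence, and so do the values read at its non-excedance positions (the weak deficiencies). I would first record this as a lemma; in particular the excedance positions of such a $\sigma$ are $e_1<e_2<\cdots<e_m$ with $\sigma(e_1)<\sigma(e_2)<\cdots<\sigma(e_m)$, so the \emph{largest} excedance value always sits at the \emph{rightmost} excedance position.

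For $Exc(\sigma)=\{t_1,\dots,t_k\}$ I would read off the reduction procedure defining $\mathcal{R}(\sigma)$ (this recovers and extends Chen et al.'s Theorem 2.4). At the first step $n_1=\sigma(e_m)$ and $i_1=e_m$, and sliding $n_1$ rightwards into position $n_1$ shifts the block of entries in positions $e_m+1,\dots,n_1$ one step to the left. The key sub-lemma --- and the only place $321$-avoidance is used here --- is that there is no fixed point of $\sigma$ strictly between $e_m$ and $n_1=\sigma(e_m)$: a fixed point $q$ in this range would force, by a counting argument, some value smaller than $q$ to occupy a position to the right of $q$, and then $(e_m,q,\cdot)$ would be an occurrence of $321$. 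Granting this, the slide destroys exactly the excedance at $e_m$ and creates none, so $\sigma^{(1)}$ is again $321$-avoiding with excedance positions $e_1,\dots,e_{m-1}$. Iterating, the positions $i_j$ peel off the excedances of $\sigma$ from right to left, whence $\{t_1,\dots,t_k\}=\{i_1,\dots,i_k\}=Exc(\sigma)$; the only routine check left is that each slide preserves $321$-avoidance, which holds because moving a largest value to the right cannot create a decreasing triple.

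It remains to prove $Des(\sigma)=Exc(\sigma)$. From the two-increasing-runs structure, a descent $\sigma(i)>\sigma(i+1)$ can occur only when $i$ is an excedance position and $i+1$ is a non-excedance position: in the other three adjacency cases the relevant increasing run forces an ascent. Hence $Des(\sigma)\subseteq Exc(\sigma)$ always, with equality precisely when no two excedance positions are adjacent. The main obstacle, and the core of the extension beyond Chen et al., is to rule out adjacent excedances using $3\bar{1}42$-avoidance. Suppose $i$ and $i+1$ are both excedances; since excedance values increase, $\sigma(i)<\sigma(i+1)$, and both values exceed $i$. The $i$ values $1,\dots,i$ must then avoid positions $i$ and $i+1$, leaving only the positions $1,\dots,i-1$ among the first $i+1$ slots, so at least one value $v\le i$ occupies a position $c>i+1$. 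As $v<\sigma(i)$, the entries $\sigma(i)\sigma(i+1)\sigma(c)$ form an occurrence of $231$ which cannot be extended to $3142$, since that would require an index $l$ with $i<l<i+1$ and $\sigma(l)<\sigma(c)$, and no integer lies strictly between $i$ and $i+1$. This contradicts $3\bar{1}42$-avoidance, so excedances are never adjacent and $Des(\sigma)=Exc(\sigma)$, completing the proof.
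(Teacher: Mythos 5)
Your proof is correct and follows the same overall route as the paper: both arguments rest on the fact that a $321$-avoiding permutation is nonnesting, so that its excedance values and its non-excedance values each form increasing subsequences; both then identify the tails $t_j$ with the excedance positions by tracking the peeling procedure, and both obtain $Des(\sigma)=Exc(\sigma)$ from the four-case adjacency analysis combined with the non-adjacency of excedance positions. The one genuine difference lies in how that non-adjacency is established. The paper simply invokes Theorem \ref{chen1}: once $Exc(\sigma)=\{t_1,\dots,t_k\}$ is known, the condition $t_{j-1}+2\le t_j$ immediately forbids adjacent excedances. You instead derive it directly from $3\bar{1}42$-avoidance by a pigeonhole argument producing an occurrence of $231$ at positions $i<i+1<c$ that cannot be extended to $3142$; this is more self-contained (it does not lean on the cited characterization) at the cost of a little extra work, and it rests on the correct reading of the barred pattern (every occurrence of $231$ --- not $321$, as misprinted in the paper's introduction --- must extend to $3142$). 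You also supply in full the step the paper dismisses with ``it is clear'': your sub-lemma that no fixed point lies strictly between $e_m$ and $\sigma(e_m)$ is precisely what guarantees that each slide removes exactly one excedance and creates none, and your counting argument for it is sound.
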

		\begin{proof}
			Let $\sigma \in S_n(321,3\bar{1}42)$. Since  $\sigma$ is $321$-avoiding, then $\nes(\sigma)=0$ (see. \cite[Lem. 5.1]{Rakot}). Consequently, if $i_1<i_2<\cdots<i_k$ and  $j_1<j_2<\cdots<j_{n-k}$ are respectively excedances and non-excedances of $\sigma$, then we have $\sigma(i_1)<\sigma(i_2)<\cdots<\sigma(i_k)$ and $\sigma(j_1)<\sigma(j_2)<\cdots<\sigma(j_{n-k})$. 
			It is clear that, when we apply the above procedure, we get $\mathcal{R}(\sigma)=\{(\sigma(i_1)-1,i_1),(\sigma(i_2)-1,i_2),\ldots,(\sigma(i_k)-1,i_k)\}$. So, we have  $Exc(\sigma)=\{i_1, i_{2},\ldots,i_k\}$. For any $1\leq j\leq k$, since $i_j+2<i_{j+1}$ (see Theorem \ref{chen1}), then $i_j+1$ is a non-excedance of $\sigma$ and consequently we have $\sigma(i_j)>\sigma(i_j+1)$. Thus, excedances of $\sigma$ are all  descents of $\sigma$. Furthermore, for some $x$, a non-excedance $j_x$ of $\sigma$ can not be  descent of $\sigma$ because $j_x$ is followed by, either an excedance and we have $\sigma(j_x)<j_{x}+1<\sigma(j_{x}+1)$, or a non-excedance and we have $\sigma(j_{x})<\sigma(j_{x}+1)<j_{x}+1$. Thus, we get $Des(\sigma)=Exc(\sigma)$.
		\end{proof}
		
		Now, we recall the (x+y)-labelling and the strip decomposition of a Motzkin path.
		We call  the \textit{(x+y)-labelling} of a Motzkin path $P$ the action of labelling some cells (squares and triangulars containing a down-step) between $P$ and the $x$-axis by the sum of the coordinates of their left bottom coin.  We now define the strip decomposition of a Motzkin path. Suppose $P=P_{n,k}$ is a Motzkin path of length $n$ that contains $k$ up steps. If $k = 0$, then the strip decomposition of $P_{n,0}$ is simply the empty set. For any $P_{n,k} \in \M_n$, let $A \rightarrow B$ be the last up step and $E \rightarrow F$ the last down step on $P_{n,k}$. Then we define the strip of $P_{n,k}$ as the path from $B$ to $F$ along
		the path $P_{n,k}$. Now we move the points from $B$ to $E$ one layer lower, namely, subtract the $y$-coordinate by $1$, and denote the adjusted points by $B_0, \ldots, E_0$. We form a new Motzkin path by using the path $P_{n,k}$ up to the point $A$, then joining the point $A$ to $B_0$ and following the adjusted segment until we reach the point $E_0$, then continuing with the
		points on the $x$-axis to reach the destination $(n, 0)$. Denote this Motzkin path by $P_{n,k-1}$, which may end with some horizontal steps.
		From the strip of $P_{n,k}$, we may define the value $h_k$ as the label of the cell containing
		the step $E \rightarrow F$. Clearly, we have $h_k \leq n-1$. The value $t_k$ is defined as the label of the
		cell containing the step starting from the point $B$.
		
		Iterating the above procedure, we obtain the set $\mathcal{S}(P):=\{(h_i,t_i)\}_{1 \leq i \leq k}$, called the \textit{strip decomposition} of $P$, satisfying
		\begin{equation}\label{rel41}
		h_j<h_{j+1} \text{ and } t_j+2<t_{j+1} \text{ for any $1 \leq i < k$}.
		\end{equation}
		The strip decomposition of the Motzkin path $P=uuhduudddudduuhdd$ drawn in Figure \ref{fig41} is $\mathcal{S}(P)=\{(5,1),(6,3),(7,6),(9,8),(10,10),(14,12),(15,14)\}$. It is known that every Motzkin path $P$ can be determined from the set  $\mathcal{S}(P)$ satisfying \eqref{rel41} by reversing the above procedure ( see \cite[Fig. 2]{Chen2}). 
		
		\begin{figure}[h]
			\begin{center}
				\begin{tikzpicture}
				\draw[step=0.5cm, gray, very thin,dotted] (0, 0) grid (8,2);
				\draw[black] (0, 0)--(0.5, 0.5)--(1, 1)--(1.5, 1)--(2, 1.5)--(2.5, 2)--(3, 1.5)--(3.5, 1)--(4, 0.5)--(4.5, 1)--(5, 0.5)--(5.5, 0)--(6, 0.5)--(6.5, 1)--(7, 1)--(7.5, 0.5)--(8, 0);	
				
				\draw[-,dashed] (6, 0.5)--(7, 0.5)--(7.5, 0)--(8, 0);
				\draw[-,dashed] (4, 0.5)--(4.5, 0.5)--(5, 0)--(5.5, 0);
				\draw[-,dashed] (2, 1.5)--(2.5, 1.5)--(3.5, 0.5)--(4, 0)--(4.5, 0)--(5.5, 0);
				\draw[-,dashed] (1.5, 1)--(2.5, 1)--(3.5, 0)--(5, 0);
				\draw[-,dashed] (0.5, 0.5)--(2.5, 0.5)--(3, 0)--(3.5, 0);
				
				\draw[black] (7.7,0.2) node[scale=0.5pt] {$15$};\draw[black] (7.2,0.2) node[scale=0.5pt] {$14$}; \draw[black] (6.7,0.2) node[scale=0.5pt] {$13$}; 	\draw[black] (6.2,0.2) node[scale=0.5pt] {$12$}; 	\draw[black] (6.7,0.7) node[scale=0.5pt] {$14$};
				\draw[black] (7.2,0.7) node[scale=0.5pt] {$15$};
				
				\draw[black] (0.7,0.2) node[scale=0.5pt] {$1$};\draw[black] (1.2,0.2) node[scale=0.5pt] {$2$};\draw[black] (1.7,0.2) node[scale=0.5pt] {$3$};\draw[black] (2.2,0.2) node[scale=0.5pt] {$4$}; \draw[black] (2.7,0.2) node[scale=0.5pt] {$5$}; \draw[black] (3.2,0.2) node[scale=0.5pt] {$6$}; \draw[black] (3.7,0.2) node[scale=0.5pt] {$7$};\draw[black] (4.2,0.2) node[scale=0.5pt] {$8$}; \draw[black] (4.7,0.2) node[scale=0.5pt] {$9$};\draw[black] (5.2,0.2) node[scale=0.5pt] {$10$};
				
				\draw[black] (1.2,0.7) node[scale=0.5pt] {$3$};\draw[black] (1.7,0.7) node[scale=0.5pt] {$4$};\draw[black] (2.2,0.7) node[scale=0.5pt] {$5$}; \draw[black] (2.7,0.7) node[scale=0.5pt] {$6$}; \draw[black] (3.2,0.7) node[scale=0.5pt] {$7$}; \draw[black] (3.7,0.7) node[scale=0.5pt] {$8$}; \draw[black] (4.7,0.7) node[scale=0.5pt] {$10$};
				
				\draw[black] (2.2,1.2) node[scale=0.5pt] {$6$};\draw[black] (2.7,1.2) node[scale=0.5pt] {$7$}; \draw[black] (3.2,1.2) node[scale=0.5pt] {$8$};
				
				\draw[black] (2.7,1.7) node[scale=0.5pt] {$8$};
				\end{tikzpicture}
				\caption{The $(x + y)$-labelling and the strip decomposition of a Motzkin path.}\label{fig41}
			\end{center}
		\end{figure}
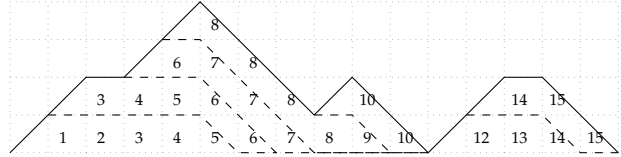
		
		According to Theorem \ref{chen1}, each strip decomposition of a Motzkin path $P$, i.e. $\mathcal{S}(P):=\{(h_i,t_i)\}_{1 \leq i \leq k}$, satisfying  \ref{rel41} is associated with a permutation $\sigma\in S_n(321,3\bar{1}42)$ throughout its reduced decomposition deduced from $\mathcal{R}(\sigma)=\{(h_i,t_i)\}_{1 \leq i \leq k}$, i.e. $\sigma_{1}\sigma_{2}\cdots \sigma_{k}$, where $\sigma_j=s_{h_j}s_{h_j-1}\cdots s_{t_j}$. In other words, the bijection $\Phi_3$ of Chen et al. from $\M_n$ to $S_n(321,3\bar{1}42)$ is defined as follows: for any $P\in \M_n$
		\begin{equation*}
		\sigma=\Phi_3(P) \text{ if and only if }  \mathcal{R}(\sigma)=\mathcal{S}(P).
		\end{equation*}

		\begin{proposition}
			Let $P\in \M_n$ and $\sigma=\Phi_3(P)$. We have 
			\begin{align*}
			(\exc,\crs)(\sigma)=(\up,\sh_{u}+\sh_{h})(P).
			\end{align*}
		\end{proposition}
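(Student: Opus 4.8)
The plan is to avoid computing $\crs(\sigma)$ directly on the permutation side, and instead to reduce it to inversions via the arithmetic identity \eqref{eq01}. The key observation is that $\sigma\in S_n(321,3\bar{1}42)$ is in particular $321$-avoiding, hence nonnesting, so that $\nes(\sigma)=0$; this is exactly the fact already exploited in the proof of Theorem \ref{mazchen}, citing \cite[Lem.~5.1]{Rakot}. Specialising \eqref{eq01} to such a $\sigma$ therefore gives $\crs(\sigma)=\inv(\sigma)-\exc(\sigma)$, so it suffices to express both $\exc(\sigma)$ and $\inv(\sigma)$ in terms of statistics of the path $P$ with $\sigma=\Phi_3(P)$.

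First I would handle $\exc(\sigma)=\up(P)$. By construction, each iteration of the strip decomposition peels off exactly one up step, so the set $\mathcal{S}(P)=\{(h_i,t_i)\}_{1\le i\le k}$ has cardinality $k=\up(P)$. Since $\mathcal{R}(\sigma)=\mathcal{S}(P)$ by the very definition of $\Phi_3$, Theorem \ref{mazchen} gives $Exc(\sigma)=\{t_1,\dots,t_k\}$, a set of $k$ distinct indices (distinctness follows from $t_j+2<t_{j+1}$ in \eqref{rel41}). Hence $\exc(\sigma)=k=\up(P)$.

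Next I would compute $\inv(\sigma)$. Since $\Phi_3$ is precisely the bijection of Chen et al.~\cite{Chen2}, it exchanges $\inv$ with $\area-\sh_{u}$, as recalled in the introduction; that is, $\inv(\sigma)=\area(P)-\sh_{u}(P)$. Substituting the decomposition of the area from Proposition \ref{prop22}, namely $\area(P)=2\sh_{u}(P)+\sh_{h}(P)+\up(P)$, yields
\begin{align*}
\inv(\sigma)=2\sh_{u}(P)+\sh_{h}(P)+\up(P)-\sh_{u}(P)=\sh_{u}(P)+\sh_{h}(P)+\up(P).
\end{align*}
Combining this with $\crs(\sigma)=\inv(\sigma)-\exc(\sigma)$ and $\exc(\sigma)=\up(P)$ from the previous step gives $\crs(\sigma)=\sh_{u}(P)+\sh_{h}(P)$, as claimed.

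The computation itself is routine once the two external inputs are in place, so the real content is in identifying which facts to invoke: the nonnesting property of $321$-avoiders (to kill the $\nes$ term in \eqref{eq01}) and Chen et al.'s statistic transfer $\inv\leftrightarrow\area-\sh_{u}$. The main point that needs genuine care is checking that $\Phi_3$ as defined here (through $\mathcal{R}(\sigma)=\mathcal{S}(P)$) coincides with Chen et al.'s bijection, so that their $\inv=\area-\sh_{u}$ correspondence indeed applies; granting this, $\exc(\sigma)=\up(P)$ follows cleanly from Theorem \ref{mazchen} together with the cardinality bookkeeping of the strip decomposition.
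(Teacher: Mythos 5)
Your proposal is correct and follows essentially the same route as the paper: both kill the nesting term in \eqref{eq01} via the $321$-avoidance, obtain $\exc(\sigma)=\up(P)$ from the cardinality of the strip decomposition together with Theorem \ref{mazchen}, and then combine Chen et al.'s identity $\inv(\sigma)=(\area-\sh_u)(P)$ with Proposition \ref{prop22}. No substantive differences to report.
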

		\begin{proof}
			Since the strip decomposition of $P$ is associated with up steps, we have 	$\up(P)=|\mathcal{S}(P)|=|\mathcal{R}(\sigma)|$. Moreover, using Theorem \ref{mazchen}, we obtain  $|\mathcal{R}(\sigma)|=\exc(\sigma)$. So, we get $\up(P)=\exc(\sigma)$. Chen et al. proved that we have $\inv(\sigma)=(\area-\sh_{u})(P)$ (see \cite[Thm. 3.2]{Chen2}). Finally, we get \begin{align*}
			\crs(\sigma)&=\inv(\sigma)-\exc(\sigma) \ \text{ using \eqref{eq01} with $\nes(\sigma)=0$}\\
			&=\area(P)-\sh_{u}(P)-\up(P)\\
			&=\sh_{u}(P)+\sh_{h}(P) \text{ using \eqref{eq11y} of Proposition \ref{prop22}}.
			\end{align*} 
			This ends the proof of our proposition.
		\end{proof}
		\begin{theorem}We have 
			\begin{align}
			\sum_{\sigma \in S(321,3\bar{1}42)}y^{\exc(\sigma)}q^{\crs(\sigma)}t^{|\sigma|}=\displaystyle \frac{1}{1-t-\displaystyle\frac{yt^2}{1-qt-\displaystyle\frac{yqt^2}{1-q^2t-\displaystyle\frac{yq^2t^2}{\ddots}}}}.
			\end{align}
		\end{theorem}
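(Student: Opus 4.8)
The plan is to transport the generating function from $S(321,3\bar{1}42)$ to Motzkin paths through the bijection $\Phi_3$ of Chen et al., and then read off the answer from the master continued fraction of Theorem \ref{thm21}. Since $\Phi_3$ restricts to a bijection $\M_n \to S_n(321,3\bar{1}42)$ for each $n$, it preserves length, so summing over all $\sigma \in S(321,3\bar{1}42)$ is the same as summing over all $n$ and all preimages $P=\Phi_3^{-1}(\sigma)\in\M_n$. Invoking the preceding proposition, which gives $(\exc,\crs)(\sigma)=(\up,\sh_u+\sh_h)(P)$, the left-hand side becomes
\begin{align*}
\sum_{\sigma \in S(321,3\bar{1}42)} y^{\exc(\sigma)}q^{\crs(\sigma)}t^{|\sigma|}
&=\sum_{n\geq 0}\sum_{P\in\M_n} y^{\up(P)}q^{\sh_u(P)+\sh_h(P)}t^{n}.
\end{align*}

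Next I would split $q^{\sh_u(P)+\sh_h(P)}=q^{\sh_u(P)}q^{\sh_h(P)}$ and insert the trivial factor $1^{\hor(P)}$, so that the summand matches exactly the shape $a^{\hor(P)}b^{\up(P)}c^{\sh_u(P)}d^{\sh_h(P)}$ defining $I_n(a,b,c,d)$. This identifies the generating function as $I(1,y,q,q;t)$, that is, the specialization $a=1$, $b=y$, $c=q$, $d=q$ of $I(a,b,c,d;t)$.

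Finally I would substitute these four values into the continued fraction \eqref{seq1eq1} of Theorem \ref{thm21}. The substitution turns the successive numerators $bt^2, bct^2, bc^2t^2,\dots$ into $yt^2, yqt^2, yq^2t^2,\dots$ and the successive denominator constants $at, adt, ad^2t,\dots$ into $t, qt, q^2t,\dots$, which is precisely the claimed expansion. I do not expect a genuine obstacle: the substance of the theorem already resides in the preceding proposition and in Theorem \ref{thm21}, and the only point requiring care is the bookkeeping of the specialization $c=d=q$, which collapses the two parameters that separately tracked $\sh_u$ and $\sh_h$ into the single statistic $\crs=\sh_u+\sh_h$.
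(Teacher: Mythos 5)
Your proposal is correct and follows essentially the same route as the paper: the paper's proof likewise uses the preceding proposition to identify the generating function with $I_n(1,y,q,q)$ and then specializes $a=1$, $b=y$, $c=d=q$ in the continued fraction \eqref{seq1eq1}. Your bookkeeping of the specialization is accurate, so there is nothing to add.
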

		\begin{proof} By the preceding proposition, we can get $\displaystyle\sum_{\sigma \in S_n(321,3\bar{1}42)} y^{\exc(\sigma)}q^{\crs(\sigma)}=I_n(1,y,q,q)$. 
			By setting $a=1$, $b=y$ and $c=d=q$ in \eqref{seq1eq1}, we obtain the theorem.
		\end{proof}
		We  now establish the proof of the second part of our main result concerning the recursion for the polynomials $\sum_{\sigma} q^{\crs(\sigma)}$, where the sum runs over all $\sigma  \in S_n(321,3\bar{1}42)$. For that, we recall the Stieltjes tableau $(h_{n,i})$ of two sequences $(\alpha_i)$ and $(\beta_i)$  considered by Dumont \cite{Dumont} in his master course on the $J$-continued fractions and Motzkin paths. He defined the tableau $(h_{n,i})$ through the following recurrence elsewhere
		\begin{align}\label{dumont}
		\begin{cases}
		h_{0,0}&=1, \text{ and } h_{n,i}=0  \text{ if } i<0 \text{ or } i>n; \\
		h_{n,i}&=\beta_ih_{n-1,i-1}+\alpha_{i+1}h_{n-1,i}+h_{n-1,i+1}, \text{ elsewhere}.
		\end{cases}
		\end{align}
		Using recursion \eqref{dumont},  Dumont showed the following theorem. To prove it, we can we can inquire into the paper of Dumont and Randrianarivony \cite{DumontArth}. 
		\begin{theorem}{\rm \cite{Dumont}} \label{dthm} We have
			\begin{align}
			\frac{1}{1-\alpha_1t-\displaystyle\frac{\beta_1t^2}{1-\alpha_2t-\displaystyle\frac{\beta_2t^2}{1-\alpha_3t-\displaystyle\frac{\beta_3t^2}{\ddots}}}}=h_{0,0}+h_{1,0}t+h_{2,0}t^2+\cdots + h_{n,0}t^n+\cdots~.
			\end{align}	
		\end{theorem}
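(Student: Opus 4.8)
The plan is to recognize the array $(h_{n,i})$ as the generating array of weighted Motzkin paths and to extract the continued fraction from the three-term recurrence \eqref{dumont} by passing to column generating functions. First I would introduce, for each $i\ge 0$, the column generating function $C_i(t)=\sum_{n\ge 0}h_{n,i}t^n$, and record that $C_0(t)=\sum_{n\ge 0}h_{n,0}t^n$ is exactly the series we must identify with the continued fraction. Because $h_{n,i}=0$ whenever $i>n$, each $C_i(t)$ is a power series whose lowest-order term is $t^i$, with coefficient $\beta_1\cdots\beta_i$; to keep all subsequent divisions legitimate I would regard the $\alpha_j,\beta_j$ as formal indeterminates over $\mathbb{Q}$ and work in $\mathbb{Q}[\alpha_\bullet,\beta_\bullet][[t]]$, specializing only at the end.

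Next I would translate \eqref{dumont} into relations among consecutive columns. Multiplying the recurrence by $t^n$ and summing over $n\ge 1$ gives, for every $i\ge 1$,
\[
C_i(t)=\beta_i t\,C_{i-1}(t)+\alpha_{i+1}t\,C_i(t)+t\,C_{i+1}(t),
\]
while the initial datum $h_{0,0}=1$ together with the vanishing of the $\beta_0$-term yields the exceptional relation $C_0(t)=1+\alpha_1 t\,C_0(t)+t\,C_1(t)$. Introducing the ratios $R_i(t)=C_i(t)/C_{i-1}(t)$ for $i\ge 1$ and dividing the first relation by $C_{i-1}(t)$ turns it into $R_i=\beta_i t+\alpha_{i+1}tR_i+tR_{i+1}R_i$, i.e.
\[
R_i(t)=\frac{\beta_i t}{1-\alpha_{i+1}t-t\,R_{i+1}(t)},
\]
and the exceptional relation becomes $C_0(t)=\dfrac{1}{1-\alpha_1 t-t\,R_1(t)}$. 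Substituting the expression for $R_1$, then for $R_2$, and so on, unfolds exactly the nested fraction asserted in Theorem \ref{dthm}, since $t\,R_i(t)=\beta_i t^2/(1-\alpha_{i+1}t-t\,R_{i+1}(t))$ reproduces one more storey of the continued fraction at each step.

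The step I expect to be the only real obstacle is making the passage to the \emph{infinite} continued fraction rigorous as a formal power series, since the unfolding above is a priori an infinite nesting. Here I would use the order estimate that $C_i(t)$ begins in degree $i$, so that $R_i(t)=\beta_i t+O(t^2)$ and hence $t\,R_i(t)=O(t^2)$; consequently each additional storey affects only the coefficients of higher powers of $t$, and the depth-$m$ truncation of the continued fraction agrees with $C_0(t)$ through degree of order $2m$. This yields the standard formal convergence, and letting $m\to\infty$ identifies $C_0(t)=\sum_{n\ge 0}h_{n,0}t^n$ with the continued fraction, which is the claim.

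A clean alternative for this last point, in the spirit of Dumont and Randrianarivony \cite{DumontArth}, is to observe that \eqref{dumont} says $\sum_{n\ge 0}h_{n,0}t^n=\bigl((I-tM)^{-1}\bigr)_{0,0}$ for the tridiagonal matrix $M$ whose $(i-1,i)$, $(i,i)$ and $(i+1,i)$ entries are $\beta_i$, $\alpha_{i+1}$ and $1$, and then to invoke the classical continued-fraction formula for the corner entry of the resolvent of a tridiagonal matrix, equivalently Flajolet's theorem reading $h_{n,0}$ as the weight of closed Motzkin paths. In either route, specializing the indeterminates back to the given sequences $(\alpha_i)$ and $(\beta_i)$ completes the proof of Theorem \ref{dthm}.
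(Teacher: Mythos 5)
Your proof is correct, but note that the paper itself does not prove Theorem \ref{dthm}: it attributes the statement to Dumont's lecture notes \cite{Dumont} and only remarks that a proof ``can be inspired from'' Dumont and Randrianarivony \cite{DumontArth}. What you supply is therefore a genuine addition rather than a variant of an argument in the text. Your route --- column generating functions $C_i(t)=\sum_n h_{n,i}t^n$, the translation of \eqref{dumont} into $C_i=\beta_i tC_{i-1}+\alpha_{i+1}tC_i+tC_{i+1}$ together with the exceptional relation $C_0=1+\alpha_1tC_0+tC_1$, and the ratios $R_i=C_i/C_{i-1}$ unfolding the $J$-fraction --- is the standard and fully rigorous way to establish this classical identity. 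The details check out: $h_{i,i}=\beta_1\cdots\beta_i$ gives the valuation claim that legitimizes the ratios over the ring with formal indeterminates, $C_{i+1}/C_{i-1}=R_{i+1}R_i$ gives the one-storey recursion $R_i=\beta_i t/(1-\alpha_{i+1}t-tR_{i+1})$, and your order estimate (each storey of backing up improves the agreement by $t^2$, so depth-$m$ truncation matches $C_0$ to order $t^{2m+2}$) settles formal convergence of the infinite fraction. The alternative reading via the corner entry of the resolvent of the tridiagonal matrix, equivalently Flajolet's weighted-Motzkin-path interpretation of $h_{n,0}$, is also valid and is in fact the viewpoint implicit in the paper's title of \cite{Dumont} (``Fractions continues et chemins de Motzkin''). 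Either version would serve as a complete proof of the cited theorem.
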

		
	Here, we are interested in the particular Stieltjes tableau $(H_{n,i})$ obtained from  $(\alpha_i)$ and $(\beta_i)$ with $\alpha_{i}=\beta_{i}=q^{i-1}$ for any integer $i\geq 0$, i.e. 
		\begin{align}\label{sandr}
		\begin{cases}
		H_{0,0}&=1, \text{ and } H_{n,i}=0  \text{ if } i<0 \text{ or } i>n; \\
		H_{n,i}&=q^{i-1}H_{n-1,i-1}+q^iH_{n-1,i}+H_{n-1,i+1}, \text{ elsewhere}.
		\end{cases}
		\end{align}
		As direct corollary of Theorem \ref{dumont}, we have
		\begin{align}\label{eqthm4}
		\frac{1}{1-t-\displaystyle\frac{t^2}{1-qt-\displaystyle\frac{qt^2}{1-q^2t-\displaystyle\frac{q^2t^2}{\ddots}}}}=H_{0,0}+H_{1,0}t+H_{2,0}t^2+\cdots + H_{n,0}t^n+\cdots~.
		\end{align}
		\begin{table}[h!]
			\begin{center}
				\begin{tabular}{c|lllll}
					$n/k$& 0 & 1& 2 & 3 & 4 \\
					\hline
					0 & $1$ & & & & \\
					1 & $1$ & $1$ & & & \\
					2 & $2$ & $1+q$ & $q$ & &  \\
					3 & $3+q$ & $2+2q+q^2$ & $q+q^2+q^3$& $q^3$\\
					4 & $5+3q+q^2$ & $3\!+\!4q\!+\!3q^2\!+\!2q^3$ & $2q\!+\!2q^2\!+3q^3\!+\!q^4\!+\!q^5$ & $q^3\!+\!q^4\!+\!q^5\!+\!q^6$& $q^6$
				\end{tabular}
				\caption{Values of the $q$-tableau $(H_{n,i})$ for $0\leq n,i\leq 4$.}
				\label{tabhni}
			\end{center}
		\end{table}
		
		We will prove the following unexpected recursion satisfied by the sequence $(H_{n,i})$.
		\begin{theorem} \label{thm41} For any integers $n\geq 1$ and  $1\leq i\leq n$, we have
			\begin{align}\label{R0}
			H_{n,i}&=q^{i-1}(H_{n-1,i-1}+\sum_{k=i-1}^{n-2}q^{1+k}H_{k,i-1}H_{n-1-k,0}).
			\end{align}
		\end{theorem}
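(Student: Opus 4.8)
The plan is to recast the scalar identity \eqref{R0} as one identity between the column generating functions of the tableau and then peel it apart level by level. Write $C_i(t):=\sum_{n\ge i}H_{n,i}t^n$ for the generating function of the $i$-th column, so that $C_0(t)$ is the series in \eqref{eqthm4}. Multiplying \eqref{R0} by $t^n$, summing over $n$, and using $\sum_{k}q^kH_{k,i-1}t^k=C_{i-1}(qt)$ together with $\sum_{m\ge1}H_{m,0}t^m=C_0(t)-1$, the identity \eqref{R0} is seen to be equivalent to
\begin{equation}\label{prop-gf}
C_i(t)=q^{i-1}t\,C_{i-1}(t)+q^{i}t\,C_{i-1}(qt)\bigl(C_0(t)-1\bigr),\qquad i\ge1 .
\end{equation}
Thus it suffices to prove \eqref{prop-gf}. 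First I would record what \eqref{sandr} says about the $C_i$: summing it against $t^n$ yields $C_0=1+tC_0+tC_1$ and $(1-q^it)C_i=q^{i-1}t\,C_{i-1}+t\,C_{i+1}$ for $i\ge1$. These are precisely the relations of a Stieltjes–Jacobi continued fraction with $\alpha_j=\beta_j=q^{j-1}$, and (as in the computation underlying Theorem \ref{dthm}) an immediate induction on $i$ gives the product form
\begin{equation}\label{prop-prod}
C_i(t)=q^{\binom{i}{2}}\,t^{\,i}\,f_0(t)f_1(t)\cdots f_i(t),
\end{equation}
where $f_j(t)=\dfrac{1}{\,1-q^{j}t-q^{j}t^{2}f_{j+1}(t)\,}$ is the tail starting at level $j$ and $f_0=C_0$.

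Substituting \eqref{prop-prod} into \eqref{prop-gf} and cancelling the common factor $q^{\binom{i}{2}}t^{i}$ reduces \eqref{prop-gf}, for every $i\ge1$, to
\begin{equation}\label{prop-PF}
f_i(t)-1=q^{\,i}\,\frac{f_0(qt)f_1(qt)\cdots f_{i-1}(qt)}{f_0(t)f_1(t)\cdots f_{i-1}(t)}\,\bigl(f_0(t)-1\bigr).
\end{equation}
The virtue of \eqref{prop-PF} is that it telescopes: if one establishes the single uniform functional equation
\begin{equation}\label{prop-key}
f_j(t)\bigl(f_{j+1}(t)-1\bigr)=q\,f_j(qt)\bigl(f_j(t)-1\bigr)\qquad(j\ge0),
\end{equation}
then dividing \eqref{prop-key} by $f_j(t)\bigl(f_j(t)-1\bigr)$ gives $\frac{f_{j+1}(t)-1}{f_j(t)-1}=q\,\frac{f_j(qt)}{f_j(t)}$, and multiplying these for $j=0,1,\dots,i-1$ collapses the product into exactly \eqref{prop-PF}. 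Hence the whole family \eqref{R0} follows from \eqref{prop-key}.

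The main obstacle is \eqref{prop-key}. Its $j=0$ instance, after clearing denominators through $f_0=\frac{1}{1-t-t^2f_1}$, is exactly the functional equation $C_0(t)(1-t-t^2)-1=qt^2C_0(qt)\bigl(C_0(t)-1\bigr)$; extracting the coefficient of $t^n$ this is precisely the ``unexpected'' recurrence \eqref{M2} for the base column $H_{n,0}$. Since every $f_j$ is again a continued fraction of the same shape (with parameters $q^{j},q^{j+1},\dots$), it is enough to prove \eqref{prop-key} in a manner uniform in the starting parameter. I would attack this directly from \eqref{sandr} by induction: subtracting the term $q^{i-1}H_{n-1,i-1}$ that \eqref{sandr} already supplies turns \eqref{R0} into the convolution identity $q^iH_{N,i}+H_{N,i+1}=q^i\sum_{k}q^kH_{k,i-1}H_{N-k,0}$ (with $N=n-1$); expanding $H_{N-k,0}=H_{N-1-k,0}+H_{N-1-k,1}$ via \eqref{sandr} reproduces this same identity at $N-1$ in the column-$0$ part (closed by the induction hypothesis), leaving the residual cross-column convolution $\sum_k q^kH_{k,i-1}H_{N-1-k,1}$ together with elementary boundary terms. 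Controlling this residual term — equivalently, proving \eqref{prop-key} — is the crux, and I expect it to force a strengthening of the induction to a whole family of convolution identities $\sum_k q^kH_{k,i-1}H_{m,j}$ indexed by the second column $j$; a naive first-return decomposition of the weighted Motzkin paths counted by $H_{n,0}$ will not suffice, since raising an excursion by one level contributes the wrong $q$-power and does not reproduce the dilation $t\mapsto qt$ appearing in \eqref{prop-gf}.
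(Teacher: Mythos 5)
Your reduction is carried out correctly as far as it goes: passing to the column generating functions $C_i(t)=\sum_n H_{n,i}t^n$, the Stieltjes recurrence \eqref{sandr} does give $C_0=1+tC_0+tC_1$ and $(1-q^it)C_i=q^{i-1}tC_{i-1}+tC_{i+1}$, the product formula $C_i=q^{\binom{i}{2}}t^if_0\cdots f_i$ for the continued-fraction tails follows by induction, and the telescoping correctly reduces \eqref{R0} to the single functional equation
\begin{equation*}
f_j(t)\bigl(f_{j+1}(t)-1\bigr)=q\,f_j(qt)\bigl(f_j(t)-1\bigr),\qquad j\ge 0,
\end{equation*}
which I have verified numerically through order $t^3$ at $j=0$ and which appears to be true. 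The problem is that this equation is never proved. You yourself call it ``the crux,'' observe that its $j=0$ instance is equivalent to the recurrence \eqref{M2} for $H_{n,0}$, and then only sketch an inductive attack that, by your own account, leaves an uncontrolled residual convolution $\sum_k q^kH_{k,i-1}H_{N-1-k,1}$ and would ``force a strengthening of the induction.'' That residual is precisely the hard part of the theorem. Worse, in the paper the recurrence \eqref{M2} for $H_{n,0}$ is Corollary \ref{cor41}, a \emph{consequence} of Theorem \ref{thm41} (its case $i=1$), so invoking it to settle the base case $j=0$ of your key equation would be circular unless you give it an independent proof. As written, the argument is a correct but unproved reduction, not a proof.

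For comparison, the paper proves \eqref{R0} by a direct induction on $n$ entirely at the level of the tableau entries: it treats $i=1$ and $i\ge 2$ separately, uses \eqref{sandr} to write $H_{n+1,i}$ in terms of $H_{n,i-1}$, $H_{n,i}$, $H_{n,i+1}$, applies the induction hypothesis to the term $H_{n,i+1}$ after shifting indices via $H_{k,1}=H_{k+1,0}-H_{k,0}$ and $H_{k,i}=H_{k+1,i-1}-q^{i-2}H_{k,i-2}-q^{i-1}H_{k,i-1}$, and checks that the unwanted terms cancel. If you want to salvage your route, the cleanest fix is to prove your key functional equation uniformly for the one-parameter family of tails $g(t)=1/(1-ct-ct^2\tilde g(t))$ with $\alpha_i=\beta_i=cq^{i-1}$, for instance by a coefficient-level induction that simultaneously tracks the convolutions $\sum_kq^kH_{k,i-1}H_{m,j}$ for all columns $j$ --- exactly the strengthening you anticipate --- but until that lemma is in place there is a genuine gap.
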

		\begin{proof} To prove \eqref{R0}, we will proceed by induction on $n$. For that, we can easily verify that \eqref{R0} holds for $n=1,2,3$. Suppose that it holds until some integer $n$. To prove that it still holds for $n+1$, we treat separately the cases $i=1$ and $i\geq 2$. By definition, we have
			\begin{align}\label{R1}
			H_{n+1,1}&= H_{n,0}+qH_{n,1}+H_{n,2}.
			\end{align}
			For $H_{n,2}$, we use \eqref{R0}  and the relationship $H_{k,1}=H_{k+1,0}-H_{k,0}$ for $k\geq 1$ to obtain
			\begin{align*}
			H_{n,2}&= q(H_{n-1,1}+\sum_{k=1}^{n-2}q^{1+k}H_{k,1}H_{n-1-k,0})\\
			&=q(H_{n,0}-H_{n-1,0}+\sum_{k=1}^{n-2}q^{1+k}(H_{k+1,0}-H_{k,0})H_{n-1-k,0})\\
			&=q(H_{n,0}+\sum_{k=2}^{n-1}q^{k}H_{k,0}H_{n-k,0}-(H_{n-1,0}+\sum_{k=1}^{n-2}q^{1+k}H_{k,0}H_{n-1-k,0})).
			\end{align*}
			Using the recurrence hypothesis for $H_{n,1}$, we obtain
			\begin{align*}
			H_{n,2}&=q(H_{n,0}+\sum_{k=2}^{n-1}q^{k}H_{k,0}H_{n-k,0}-(H_{n,1}-qH_{0,0}H_{n-1,0}))\\
			&=q(H_{0,0}H_{n,0}+qH_{1,0}H_{n-1,0}+\sum_{k=2}^{n-1}q^{k}H_{k,0}H_{n-k,0}-H_{n,1})  \text{ (since $H_{0,0}=H_{1,0}=1$)}\\
			&=\sum_{k=0}^{n-1}q^{1+k}H_{k,0}H_{n-k,0}-qH_{n,1}.
			\end{align*}	
			Thus, when returning to \eqref{R1}, we obtain
			\begin{align*}
			H_{n+1,1}	&=H_{n,0}+qH_{n,1}+\sum_{k=0}^{n-1}q^{1+k}H_{k,0}H_{n-k,0}-qH_{n,1}\\
			&=H_{n,0}+\sum_{k=0}^{n-1}q^{1+k}H_{k,0}H_{n-k,0}.	
			\end{align*}
			Now, we can treat by the same way the case $i\geq 2$ as follows. We start from the relation
			\begin{align}\label{R2}
			H_{n+1,i}= q^{i-1}H_{n,i-1}+q^iH_{n,i}+H_{n,i+1}.
			\end{align}
			Then, using \eqref{R0} for $H_{n,i+1}$, we get	
			\begin{align*}
			H_{n,i+1}&= q^{i-1}(H_{n-1,i}+\sum_{k=i}^{n-2}q^{1+k}H_{k,i}H_{n-1-k,0}).
			\end{align*}
			Now, replace $H_{k,i}$ by $H_{k+1,i-1}-q^{i-2}H_{k,i-2}-q^{i-1}H_{k,i-1}$ for any integer $i\geq 2$ and manipulate the obtained identity to  get
			\begin{align*}
			H_{n,i+1}&= q^{i}(H_{n,i-1}+\sum_{k+1}^{n-1}q^{k}H_{k,i-1}H_{n-k,0}-\underbrace{q^{i-2}(H_{n-1,i-2}+\sum_{k=i}^{n-2}q^{1+k}H_{k,i-2}H_{n-1-k,0})}_{A}\\
			&-\underbrace{q^{i}(H_{n-1,i-1}+\sum_{k=i}^{n-2}q^{1+k}H_{k,i-1}H_{n-1-k,0})}_{B}~)
			\end{align*}
			Using the recurrence hypothesis for $H_{n,i-1}$ and $H_{n,i}$, we get
			\begin{align*}
			A&=H_{n,i-1}-q^{i-2}(q^{i-1}H_{i-2,i-2}H_{n+1-i,0}+q^{i}H_{i-1,i-2}H_{n-i,0})\\
			&=H_{n,i-1}-q^{i-2}(H_{i-1,i-1}H_{n+1-i,0}+q^{i}H_{i-1,i-2}H_{n-i,0}) \text{ since $q^{i-1}H_{i-2,i-2}=H_{i-1,i-1}$}.
			\end{align*}
			and $B=H_{n,i}+q^{i-1}(q^{i}H_{i-1,i-1}H_{n-i,0})$. Thus, using also the fact that $q^{i-2}H_{i-1,i-2}+q^{i-1}H_{i-1,i-1}=H_{i,i-1}$, we get
			\begin{align*}
			A+B=H_{n,i}+H_{n,i-1}-q^{i-2}H_{i-1,i-1}H_{n+1-i,0}-q^{i}H_{i,i-1}H_{n-i,0}.
			\end{align*}
			Consequently, we obtain 
			\begin{align*}
			H_{n,i+1} 
			&=q^{i}(q^{i-2}H_{i-1,i-1}H_{n+1-i,0}+q^{i}H_{i,i-1}H_{n-i,0}+\sum_{k=i+1}^{n-1}q^{k}H_{k,i-1}H_{n-k,0}-H_{n,i})\\
			&=q^{i}(\sum_{k=i-1}^{n-1}q^{k}H_{k,i-1}H_{n-k,0}-H_{n,i})\\
			&=q^{i-1}\sum_{k=i-1}^{n-1}q^{1+k}H_{k,i-1}H_{n-k,0}-q^{i}H_{n,i}.
			\end{align*}
			So, when returning to \eqref{R2} and substitute $H_{n,i+1}$, we obtain		
			\begin{align*}
			H_{n+1,i}&=q^{i-1}(H_{n,i-1}+\sum_{k=i-1}^{n-1}q^{1+k}H_{k,i-1}H_{n-k,0}).
			\end{align*}
			This also ends the proof of Theorem \ref{thm41}.	
		\end{proof}
		\begin{corollary}\label{cor41}
			For any positive integer $n$, we have $H_{n,0}=\tilde{M}_n(q)$.
		\end{corollary}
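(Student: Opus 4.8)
The plan is to show that the diagonal sequence $(H_{n,0})_{n\ge 0}$ obeys exactly the recurrence \eqref{M2} defining $\tilde M_n(q)$; since both sequences start at $1$, a short induction then closes the proof. The whole argument is a transfer of information from the subdiagonal (which Theorem \ref{thm41} already controls) back to the diagonal.

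First I would produce a self-contained recurrence for the diagonal entry $H_{n,0}$. The defining relation \eqref{sandr} taken at $i=0$, together with the boundary convention $H_{n-1,-1}=0$, immediately gives
\begin{align*}
H_{n,0}=H_{n-1,0}+H_{n-1,1},
\end{align*}
so everything reduces to understanding the first subdiagonal $H_{n-1,1}$. This is precisely what Theorem \ref{thm41} supplies: specializing \eqref{R0} to $i=1$ expresses the subdiagonal purely in terms of the diagonal,
\begin{align*}
H_{m,1}=H_{m-1,0}+\sum_{k=0}^{m-2}q^{1+k}H_{k,0}H_{m-1-k,0}.
\end{align*}
Substituting $m=n-1$ and inserting the result into the first display yields, for $n\ge 2$, the closed self-recurrence
\begin{align*}
H_{n,0}=H_{n-1,0}+H_{n-2,0}+\sum_{k=0}^{n-3}q^{1+k}H_{k,0}H_{n-2-k,0}.
\end{align*}

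The final step is to recognize this as a disguised form of \eqref{M2}. Expanding the sum there, the Kronecker term $(n-1)\delta_{k,n-2}$ affects only the top index $k=n-2$, where the exponent collapses to $(n-2)+1-(n-1)=0$ and $\tilde M_0(q)=1$; thus that term contributes exactly $\tilde M_{n-2}(q)$, while all indices $k\le n-3$ carry the plain exponent $k+1$. Hence \eqref{M2} reads $\tilde M_n(q)=\tilde M_{n-1}(q)+\tilde M_{n-2}(q)+\sum_{k=0}^{n-3}q^{1+k}\tilde M_k(q)\tilde M_{n-2-k}(q)$, which is term-for-term the recurrence just derived for $H_{n,0}$. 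Checking the base values $H_{0,0}=1=\tilde M_0(q)$ and $H_{1,0}=1=\tilde M_1(q)$ directly, induction on $n$ then gives $H_{n,0}=\tilde M_n(q)$ for all $n$.

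The computation is essentially routine substitution; the only point demanding care is the bookkeeping of the boundary term, namely seeing that the isolated $H_{n-2,0}$ produced by the $i=1$ specialization of Theorem \ref{thm41} is exactly what the Kronecker-delta correction in \eqref{M2} is engineered to encode. Once that single identification is made, no further work remains.
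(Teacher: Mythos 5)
Your proof is correct and follows essentially the same route as the paper: specialize the defining recurrence \eqref{sandr} at $i=0$ to get $H_{n,0}=H_{n-1,0}+H_{n-1,1}$, expand $H_{n-1,1}$ via Theorem \ref{thm41} at $i=1$, and match the resulting self-recurrence for the diagonal against \eqref{M2} via the Kronecker-delta bookkeeping. Your index handling (the convolution term $H_{k,0}H_{n-2-k,0}$) is in fact more careful than the paper's displayed computation, which carries a typo ($H_{n-1-k,0}$ where $H_{n-2-k,0}$ is meant).
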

		\begin{proof} Using \eqref{sandr} with the fact that $H_{n,0}=H_{n-1,0}+H_{n-1,1}$ for any integer $n\geq 0$, we get 
			\begin{align*}\label{R3}
			H_{n,0}&=H_{n-1,0}+H_{n-2,0}+\sum_{k=0}^{n-3}q^{1+k}H_{k,0}H_{n-1-k,0}\\
			&=H_{n-1,0}+\sum_{k=0}^{n-2}q^{1+k-(n-1)\delta_{k,n-2}}H_{k,0}H_{n-1-k,0}.
			\end{align*}
			Since $H_{n,0}$ and $\tilde{M}_n(q)$ satisfy the same recursion (see \eqref{M2}) and the same initial values, then they are equal.
		\end{proof}
		Combining this last corollary with identity \eqref{eqthm4}, we obtain our desired result.
		\begin{theorem} \label{thm42} For any positive integer $n$, we have
			\begin{equation*}
			\sum_{\sigma \in S_n(321,3\bar{1}42)}q^{\crs(\sigma)}=\tilde{M}_n(q).
			\end{equation*}
		\end{theorem}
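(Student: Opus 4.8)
The plan is to deduce the identity by specializing the bivariate generating function already computed for $S(321,3\bar{1}42)$ and then matching the resulting continued fraction to the Stieltjes tableau $(H_{n,i})$ whose corner entries were identified in Corollary~\ref{cor41}. Every ingredient is already in place, so the argument reduces to a short chain of specialization, continued-fraction recognition, and coefficient extraction.

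First I would take the bivariate continued-fraction identity established just above, namely the one evaluating $\sum_{\sigma} y^{\exc(\sigma)}q^{\crs(\sigma)}t^{|\sigma|}$ over $S(321,3\bar{1}42)$, and set $y=1$. Since only the crossing statistic is retained, the left-hand side becomes $\sum_{\sigma\in S(321,3\bar{1}42)}q^{\crs(\sigma)}t^{|\sigma|}$, while on the right the successive partial numerators collapse to $t^2,\,qt^2,\,q^2t^2,\dots$ and the partial denominators to $1-t,\,1-qt,\,1-q^2t,\dots$, so that
\[
\sum_{\sigma\in S(321,3\bar{1}42)}q^{\crs(\sigma)}t^{|\sigma|}=\frac{1}{1-t-\dfrac{t^2}{1-qt-\dfrac{qt^2}{1-q^2t-\dfrac{q^2t^2}{\ddots}}}}.
\]

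Next I would recognize that this is precisely the continued fraction on the left-hand side of \eqref{eqthm4}, that is, the one attached by Theorem~\ref{dthm} to the data $\alpha_i=\beta_i=q^{i-1}$. Thus \eqref{eqthm4} applies verbatim and gives $\sum_{\sigma\in S(321,3\bar{1}42)}q^{\crs(\sigma)}t^{|\sigma|}=\sum_{n\geq 0}H_{n,0}\,t^n$. Comparing the coefficient of $t^n$ on both sides yields $\sum_{\sigma\in S_n(321,3\bar{1}42)}q^{\crs(\sigma)}=H_{n,0}$, and Corollary~\ref{cor41} identifies $H_{n,0}=\tilde{M}_n(q)$, which is exactly the assertion.

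I do not anticipate any genuine obstacle here, as all the heavy lifting was carried out in the preceding results; the single point deserving care is the bookkeeping in the $y=1$ specialization, namely verifying that the partial numerators and denominators reduce exactly to $q^{i-1}t^2$ and $1-q^{i-1}t$, so that they coincide with the sequences $\alpha_i=\beta_i=q^{i-1}$ feeding \eqref{eqthm4}. Once this matching is confirmed, Theorem~\ref{dthm} together with Corollary~\ref{cor41} delivers the result at once.
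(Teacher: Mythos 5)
Your proposal is correct and follows exactly the paper's route: specialize $y=1$ in the continued-fraction identity for $\sum y^{\exc}q^{\crs}t^{|\sigma|}$, match it with \eqref{eqthm4} via Theorem~\ref{dthm}, and invoke Corollary~\ref{cor41} to identify $H_{n,0}$ with $\tilde{M}_n(q)$. The paper compresses this into one sentence, but the underlying argument is identical.
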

		We close this section with an interesting non-trivial identity concerning equality of two continued fractions as another consequence of Corollary \ref{cor41}.
		\begin{theorem} \label{main12} We have the following identity
			\begin{align*}
			\frac{1}{1-\displaystyle\frac{t+t^2}{1-\displaystyle\frac{qt^2}{1-\displaystyle\frac{qt+(qt)^2}{1-\displaystyle\frac{q^3t^2}{\ddots}}}}}=\frac{1}{1-t-\displaystyle\frac{t^2}{1-qt-\displaystyle\frac{qt^2}{1-q^2t-\displaystyle\frac{q^2t^2}{\ddots}}}}.
			\end{align*}
		\end{theorem}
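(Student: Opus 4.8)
The plan is to prove the identity by showing that both sides, regarded as formal power series in $t$ with constant term $1$, satisfy one and the same functional equation, and then to invoke uniqueness. Write $L(t)$ for the left-hand continued fraction; since every partial numerator has order at least $1$ in $t$, $L(t)$ is a well-defined power series with $L(0)=1$. By \eqref{eqthm4} together with Corollary \ref{cor41}, the right-hand side equals $\tilde{M}(t):=\sum_{n\geq 0}\tilde{M}_n(q)t^n$. Hence it suffices to prove $L(t)=\tilde{M}(t)$.

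First I would exploit the $q$-scaling self-similarity of $L(t)$. Denoting the successive partial numerators by $a_1=t+t^2$, $a_2=qt^2$, $a_3=qt+(qt)^2$, $a_4=q^3t^2,\dots$, a direct check of the $q$-exponents shows that $a_{i+2}(t)=a_i(qt)$ for all $i$. Therefore the tail obtained by deleting the first two levels of $L(t)$ is exactly $L(qt)$, and substituting this back gives
\[
L(t)=\cfrac{1}{1-\cfrac{t+t^2}{1-qt^2\,L(qt)}}=\frac{1-qt^2\,L(qt)}{1-t-t^2-qt^2\,L(qt)}.
\]

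Next I would derive the very same equation for $\tilde{M}(t)$ from the recurrence \eqref{M2}. Isolating the term $k=n-2$ (whose $q$-exponent collapses to $0$ because of the Kronecker correction) rewrites \eqref{M2} as $\tilde{M}_n=\tilde{M}_{n-1}+\tilde{M}_{n-2}+\sum_{k=0}^{n-3}q^{k+1}\tilde{M}_k\tilde{M}_{n-2-k}$. Passing to generating functions, the only nonroutine step is the weighted convolution: reindexing by $m=n-2-k$ turns it into $qt^2\,\tilde{M}(qt)\bigl(\tilde{M}(t)-1\bigr)$, the factor $q^k$ being absorbed into $\tilde{M}(qt)$. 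This yields
\[
\tilde{M}(t)-1=(t+t^2)\tilde{M}(t)+qt^2\,\tilde{M}(qt)\bigl(\tilde{M}(t)-1\bigr),
\]
which rearranges to $\tilde{M}(t)=\dfrac{1-qt^2\,\tilde{M}(qt)}{1-t-t^2-qt^2\,\tilde{M}(qt)}$, identical to the equation satisfied by $L(t)$.

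Finally I would conclude by uniqueness. Setting $t=0$ in the common functional equation forces the constant term to equal $1$, and extracting the coefficient of $t^n$ expresses it solely in terms of strictly lower coefficients; thus the equation admits a unique formal power series solution. As $L(t)$ and $\tilde{M}(t)$ are both solutions, they coincide, which is the asserted equality. The step I expect to be the main obstacle is the first one: one must verify carefully that the partial numerators obey $a_{i+2}(t)=a_i(qt)$, so that the deleted tail really reproduces $L(qt)$; once this self-similarity is established, the remaining manipulations are routine.
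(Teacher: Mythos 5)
Your proof is correct and follows essentially the same route as the paper: both identify the right-hand side with $\tilde{M}(q;t)$ via Corollary \ref{cor41} and identity \eqref{eqthm4}, and both reduce the left-hand side to the functional equation $\tilde{M}(q;t)=\bigl(1-\frac{t+t^2}{1-qt^2\tilde{M}(q;qt)}\bigr)^{-1}$ coming from the recurrence \eqref{M2}. The only cosmetic difference is that the paper obtains the left continued fraction by iterating this equation, whereas you verify the tail self-similarity $a_{i+2}(t)=a_i(qt)$ and appeal to uniqueness of the power-series solution — two presentations of the same argument.
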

		\begin{proof} Since the polynomial $\tilde{M}_n(q)$ satisfies the following recursion 
			\begin{align*}
			\tilde{M}_0(q)=1, \tilde{M}_n(q)=\tilde{M}_{n-1}(q)+\tilde{M}_{n-2}(q)+\sum_{k=0}^{n-3}q^{1+k}\tilde{M}_k(q)\tilde{M}_{n-2-k}(q) \text{ for } n\geq 1,
			\end{align*}
			then its generating function, $\tilde{M}(q;t):=\sum_{n\geq 0}\tilde{M}_n(q)t^n$, satisfies	
			\begin{align*}
			\tilde{M}(q;t)&=\frac{1}{1-\displaystyle \frac{t+t^2}{1-qt^2\tilde{M}(q;qt)}}.
			\end{align*}
			So, when we continue to develop $\tilde{M}(q;q^kt)$ for $k\geq 1$, we obtain the continued fraction expansion for $\tilde{M}(q;t)$ 
			\begin{align*}
			\tilde{M}(q;t)=\frac{1}{1-\displaystyle\frac{t+t^2}{1-\displaystyle\frac{qt^2}{1-\displaystyle\frac{qt+(qt)^2}{1-\displaystyle\frac{q^3t^2}{\ddots}}}}}.
			\end{align*}
			Combining Corollary \ref{cor41} with identity \eqref{eqthm4}, we also have 	
			\begin{align*}
			\tilde{M}(q;t)=\frac{1}{1-t-\displaystyle\frac{t^2}{1-qt-\displaystyle\frac{qt^2}{1-q^2t-\displaystyle\frac{q^2t^2}{\ddots}}}}.
			\end{align*}	
			This proves the desired identity of our theorem.
		\end{proof}
	
	\section*{Acknowledgments}
		We would like to thank Arthur Randrianarivony who never accepted that we state Theorem \ref{thm42} as a conjecture. A helpful discussion with him allows us to find the recursion \eqref{R0} of Theorem \ref{thm41}.
	

	\end{document}